\begin{document}

\title{An Upper Bound for the Depth of Some Classes of Polyhedra}


\author{Mojtaba~Mohareri}

\email{m.mohareri@stu.um.ac.ir}

%
%
\address{Department of Pure Mathematics, Center of Excellence in Analysis on Algebraic Structures,
         Ferdowsi University of Mashhad, P.O.Box 1159-91775, Mashhad, Iran.}


\author{Behrooz~Mashayekhy}
\email{bmashf@um.ac.ir}
\address{Department of Pure Mathematics, Center of Excellence in Analysis on Algebraic Structures,
         Ferdowsi University of Mashhad, P.O.Box 1159-91775, Mashhad, Iran.}
         \def\Speaker{$^{*}$\protect\footnotetext{$^{*}$ C\lowercase{orresponding author.}}}
\author{Hanieh~Mirebrahimi\Speaker}

\email{h$_{-}$mirebrahimi@um.ac.ir}
\address{Department of Pure Mathematics, Center of Excellence in Analysis on Algebraic Structures,
         Ferdowsi University of Mashhad, P.O.Box 1159-91775, Mashhad, Iran.}



\classification{55P15, 55P55, 55P20,54E30, 55Q20.}

\keywords{Homotopy domination, Homotopy type, Polyhedron, CW-complex, Compactum..}

\begin{abstract}
 K. Borsuk in the seventies introduced  the notions of  capacity and  depth of compacta together with some relevant problems. In this paper, first, we introduce the concepts of the (strong) capacity and the (strong) depth  of an object in an arbitrary category. Then in the category of groups, we  compute the (strong) capacity and the (strong) depth  of  some well-known groups.  Finally, we find an upper bound for the depth of some classes of finite polyhedra which  generalizes a result of D. Kolodziejczyk in this subject.
\end{abstract}

\received{Month Day, Year}   
\revised{Month Day, Year}    
\published{Month Day, Year}  
\submitted{}      
\volumeyear{} 
\volumenumber{} 
\issuenumber{}   
\startpage{1}     
\articlenumber{1} 
\owner{}

\maketitle


\section{Introduction and Motivation}
Throughout this paper,  each polyhedron and CW-complex is assumed to be finite and connected. Also, by a map between two CW-complexes we mean here a cellular one.

 K. Borsuk in \cite{So} introduced  concepts of the capacity and the depth of a compactum together with some relevant problems. Indeed, the capacity $C(A)$ of a compactum $A$ is the cardinality of the class of the shapes of all compacta $X$ such that $Sh(X)\leqslant Sh(A)$, and the depth $D(A)$ of a compactum $A$ is the least upper bound of the lengths of all chains  $\mathcal{S}h(X_{k}) <\cdots < \mathcal{S}h(X_{2})  < \mathcal{S}h(X_{1}) \leqslant \mathcal{S}h(A)$, where $\mathcal{S}h(X)<\mathcal{S}h(Y)$ denotes that $\mathcal{S}h(X)\leqslant \mathcal{S}h(Y)$ holds but $\mathcal{S}h(Y)\leqslant \mathcal{S}h(X)$ fails. If this upper bound is infinite, we write $D(A)=\mathcal{N}_0$ (for more details, see \cite{Mar}). It is clear that $D(A) \leq C(A)$ for each compactum $A$.

For polyhedra, the notions shape and shape domination in the above definition can be replaced by the notions homotopy type and homotopy domination, respectively. Consequently, the capacity and depth in shape category of compacta are the same as in the homotopy category of CW-complexes and homotopy classes of cellular maps between them (see \cite{K4}).  S. Mather in \cite{17} proved that every polyhedron dominates only countably many different homotopy types (see also \cite{Hol}).

 Note that there exist polyhedra with the same capacities but different depths. For instance, one can consider $P_1 =\mathbb{T}\# \mathbb{T}$ and $P_2 =\mathbb{S}^1 \times \mathbb{S}^2$, where $\#$ denotes the connected sum operation. One can observe that $C(P_1 )=C(P_2 )=4$ but $4=D(P_1 )\neq D(P_2 )=3$ (see \cite{Mah}).

The answers to most of problems of \cite{So} are given by D. Kolodziejczyk in \cite{K2}.  In \cite{K7}, answering the question (4) of \cite{So}: ``Is it true that the capacity of every finite polyhedron is finite? '', Kolodziejczyk showed that there is a polyhedron dominating infinitely many different homotopy types of polyhedra. Kolodziejczyk in \cite{K3}, gave an affirmative answer to question (8) of \cite{So}: ``Is there a compactum with infinite capacity and finite depth?''. Indeed, by a result of Kolodziejczyk,  for every non-abelian poly-$\mathbb{Z}$-group $G$ and an integer $n\geq 3$, there exists a polyhedron $P$ such that $\pi_1 (P)\cong G$, $\dim P=n$, $C(P)$ is infinite, but $D(P)$ is finite. Accordingly, the examples of polyhedra with infinite capacity are even frequent. In \cite{K6}, Kolodziejczyk also  proved that every polyhedron with virtually polycyclic fundamental group has finite depth.

 Moreover, it is investigated the finiteness conditions for capacities of some polyhedra by Kolodziejczyk (see \cite{K5, K4}). For instance, a polyhedron $Q$ with finite fundamental group $\pi_1 (Q)$, and a polyhedron $P$ with abelian fundamental group $\pi_1 (P)$ and finitely generated homology groups $H_i (\tilde{P})$, for $i\geq 2$,  have finite capacities (hence finite depths), where $\tilde{P}$ denotes the universal covering space of $P$.

 Note that the capacity of a polyhedron is a homotopy invariant, i.e., for polyhedra $X$ and $Y$ with the same homotopy type, $C(X)=C(Y)$.  Hence it seems interesting to find polyhedra with finite capacities and compute the capacities of some of their well-known spaces.

Note that every compactum shape dominated by some polyhedron $P$ has the shape of some $P$-like compactum (for the definition of $P$-like, see \cite{Ma}). This is a corollary of some more general results of Holsztynski, Mardesic and Segal (see, for example, \cite[Ch.II.9, Lemma 4 on p. 222]{Mar} or \cite[Lemma 7]{KOW}). Therefore, the capacities of some polyhedra $P$ may be easily obtained from the classifications of P-like compacta. Such classifications are known for the $n$-dimensional torus, $\mathbb{S}^n$, $\mathbb{CP}^n$, $\mathbb{RP}^n$ (\cite{EGM}, \cite{Ke}, \cite{MS1}, \cite{W}, \cite{HS1}) and for a few other spaces. In the paper \cite{W} one can find shape classifications of $P$-like compacta for finite wedges of spheres $\mathbb{S}^n$ ($n \geq 2$). Two compacta of this kind, $X=\bigvee_i \mathbb{S}^n$ and $Y=\bigvee_j \mathbb{S}^n$, have the same shape if and only if $\check{H}^n (X; \mathbb{Z}) = \check{H}^n (Y; \mathbb{Z}$) (Cech cohomology groups) \cite[Theorem 2]{W}. As a consequence, the capacities of finite wedges of spheres $\mathbb{S}^n$ ($n \geq 2$) can be easily computed. It is also easily seen that the capacity of the wedge of $k$ copies of $\mathbb{S}^1$ is $k+1$.

M. Abbasi and B. Mashayekhy in \cite{Mah} computed the capacities of compact 2-dimensional manifolds. They showed that the capacities (and also the depths) of a compact orientable surface of genus $g\geq 0$ and a compact non-orientable surface of genus $g>0$ are equal to $g+2$ and $[\frac{g}{2}]+2$, respectively. The authors, in \cite{Moh1} computed the capacities of  finite wedges of spheres of various dimensions. Indeed, we showed that the capacity (and also the depth) of $\bigvee_{n\in I} (\vee_{i_n} \mathbb{S}^n)$ is equal to $\prod_{n\in I}(i_n +1)$, where $\vee_{i_n} \mathbb{S}^n$ denotes the wedge of $i_n$ copies of $\mathbb{S}^n$, $I$ is a finite subset of $\mathbb{N}$ and $i_n \in \mathbb{N}$. Also, in \cite{Moh},  we   computed the capacity of the product of two spheres of the same or different dimensions and the capacities of lens spaces which are a class of closed orientable 3-manifolds.

 In Section 2, we define  concepts of the (strong) capacity and the (strong) depth for an $A\in Obj\; (\mathcal{C})$, where  $\mathcal{C}$ denotes an arbitrary category. Also we prove some facts concerning relationships between concepts of the capacity and the depth in both ordinary and strong cases. In Section 3, in the category of groups, we compute the (strong) capacities and (strong) depths of two well-known groups: the free groups of finite rank and the finitely generated abelian groups. Also, we show that every virtually polycyclic group has finite depth.   Finally, in Section 4,  we generalize a theorem of  Kolodziejczyk (see \cite[Theorem 2]{K6}) which  states that every polyhedron with virtually polycyclic fundamental group has finite depth. Also, we present an upper bound for the depth of some classes of polyhedra.
\section{Strong capacity and strong depth in a category}
In what follows, $\mathcal{C}$ denotes an arbitrary category.

Recall that a domination in a given category $\mathcal{C}$ is a morphism $g : Y \longrightarrow X$,
$X, Y \in Obj \mathcal{C}$, for which there exists a morphism $f : X \longrightarrow Y$ of $\mathcal{C}$ such that $g\circ f = id_X$. In this case, we say that $X$ is dominated by $Y$ and we write $X \leqslant^d Y$ (rather than the notation ``$\leqslant$'', see for example \cite{K6}). Also,  $X$ is called  strongly dominated by $Y$ and is denoted by  $X <^s Y$  if $X\leqslant^d Y$ holds but $Y\leqslant^d X$ fails. Moreover, we say that $X$ is properly dominated by $Y$ and we denote it by $X<^p Y$ if $X\leqslant^d Y$ but $X \cong Y$ fails.

Clearly,  if $X,A \in Obj\; \mathcal{C}$ and $X<^s A$, then $X<^p A$ but the converse does not hold in general. For example, in the category of groups, there exists a countable torsion-free abelian group $A$ such that $A\cong A\oplus A\oplus A$ but $A\not \cong A\oplus A$ (see \cite{Fuchs}). It follows that $A\leqslant^d A\oplus A$ and  $A\oplus A \leqslant^d  A$ but $A\not \cong A\oplus A$. Therefore, $A<^p A\oplus A$ but $A\not <^s A\oplus A$.   Also, in the homotopy category of CW-complexes and homotopy classes of cellular maps between them, there exist two CW-complexes $X$ and $Y$ for which $X\leqslant^d Y$ and $Y\leqslant^d  X$ but $X\not \simeq Y$ (see \cite{Ste}). Accordingly, $X<^p Y$ but $X\not <^s Y$.
\begin{definition}
The capacity $C(A)$ of an $A \in Obj\; \mathcal{C}$ is the cardinality of the class of isomorphism classes of all the $X \in Obj\; \mathcal{C}$ such that $X\leqslant^d A$. Also, the strong capacity $SC(A)$ of an $A \in Obj\; \mathcal{C}$ is  the cardinality of the class of isomorphism classes of all  $X \in Obj\; \mathcal{C}$ such that $X<^s A$. If there is no such $X$, we write $SC(A)=0$.
\end{definition}
The previous definition of the capacity of a compatum in the shape category of compacta coincides with Borsuk's definiton of the capacity (see \cite{So}).
 \begin{definition}
\begin{enumerate}
 \item
A chain $X_k <^p \cdots <^p X_1 \leqslant^d A$, where $X_i \in Obj\; \mathcal{C}$ for $i = 1,\cdots , k$, is called a chain of length $k$ for $A \in Obj\; \mathcal{C}$.
 \item
The  depth $D(A)$ of an $A\in Obj\; \mathcal{C}$ is the least upper bound of the lengths of all chains  for $A$. If this upper bound is infinite, we write $D(A)=\mathcal{N}_0$.
 \item
A chain  $X_k <^s \cdots <^s X_1 \leqslant^d A$, where $X_i \in Obj\; \mathcal{C}$ for $i = 1,\cdots , k$, is called an $s$-chain of length $k$ for $A \in Obj\; \mathcal{C}$.
 \item
  The strong depth $SD(A)$ of $A$ is the least upper
bound of the lengths of all $s$-chains for $A$. If this upper bound is infinite, we write $SD(A) =\mathcal{N}_0$.
\end{enumerate}
 \end{definition}
 Note that our definition of strong depth of a compactum in the shape category of compacta coincides with Borsuk's definition of the depth of a compactum (for more details, see \cite{So}).

It is easy to see that $A\in Obj\; \mathcal{C}$ has finite (strong) depth $k$ if and only if there exists (an) a $(s-)$chain of length $k$ for $A$ but  there is no $(s-)$chain of length $k+1$, equivalently, there exists (an) a $(s-)$chain of length $k$ for $A$ and for every $(s-)$chain
\[
X_k <^p \cdots <^p X_1 \leqslant^d X_0 =A \quad  (X_k <^s \cdots <^s X_1 \leqslant^d X_0 =A)
\]
 of  length of $k$ and for every  $X_{k+1}\leqslant^d X_k$, we have $X_{k+1}\cong X_k$ ($X_{k}\leqslant^d X_{k+1}$).

It is clear that $SC(A)\leq C(A)-1$, for every  $A\in Obj\; \mathcal{C}$.
\begin{definition}
Two objects $X$ and $Y$ in  $\mathcal{C}$ are called $d$-equal and denoted by $X=_d Y$  if $X\leqslant^d Y$ and $Y\leqslant^d X$.
\end{definition}
\begin{definition}
We say that  $A\in Obj\; \mathcal{C}$  has domination equality ($d$-equlaity for short) property if  for every  $X\in Obj\; \mathcal{C}$ with $X=_d A$, we have $X\cong A$.
\end{definition}
By \cite[Theorem 3]{K1}, if $X\in ANR$ and $\pi_1 (X)$ is virtually polycyclic fundamental group, then $X$ has $d$-equality property (also see \cite{KOMEN}). Also,  every Hopfian group has  $d$-equality property (see Prposition \ref{Hop}).
\begin{lemma}\label{1}
Suppose that $A\in Obj \; \mathcal{C}$  has $d$-equality property. If $X\in Obj\; \mathcal{C}$ is properly  dominated by $A$, then $X$ is strongly dominated by $A$.
\end{lemma}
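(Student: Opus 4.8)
The plan is to argue directly by contradiction, unpacking the definitions of proper and strong domination. Recall that the hypothesis $X <^p A$ means precisely that $X \leqslant^d A$ holds while $X \cong A$ fails. To conclude $X <^s A$, I must establish the same domination $X \leqslant^d A$ (which is already given) together with the \emph{failure} of the reverse domination $A \leqslant^d X$. So the only real content is to show that $A \leqslant^d X$ cannot hold.

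First I would suppose, for the sake of contradiction, that $A \leqslant^d X$ does hold. Combining this with the given domination $X \leqslant^d A$, I obtain both $X \leqslant^d A$ and $A \leqslant^d X$, which is exactly the definition of $X =_d A$. This is the pivotal observation: proper domination supplies one half of a $d$-equality, and the assumed reverse domination would supply the other half.

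Next I would invoke the hypothesis that $A$ has the $d$-equality property. By definition, since $X =_d A$, this property forces $X \cong A$. But this directly contradicts the assumption $X <^p A$, which requires that $X \cong A$ fail. Hence the supposition $A \leqslant^d X$ is untenable, so $A \leqslant^d X$ fails. Together with the given $X \leqslant^d A$, this is precisely the definition of $X <^s A$, completing the argument.

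I do not expect any genuine obstacle here; the statement is a short logical consequence of the definitions once one recognizes that the $d$-equality property is the exact device needed to upgrade proper domination to strong domination. The only point requiring care is to keep the two domination directions straight and to verify that the negation of the reverse domination, rather than its assertion, is what the definition of $<^s$ demands.
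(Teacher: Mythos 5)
Your proof is correct and follows essentially the same route as the paper: assume $A \leqslant^d X$ for contradiction, combine with the given $X \leqslant^d A$ to get $X =_d A$, apply the $d$-equality property to conclude $X \cong A$, and contradict $X <^p A$. The paper's proof is identical in substance, merely stated more tersely.
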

\begin{proof}
By contrary, assume that $X$ is not strongly  dominated by $A$, i.e., $X\leqslant^d A$ and $A\leqslant^d X$. Since $A$ has  $d$-equality property, we have $A\cong X$. But this is  a contradiction to $X<^p A$.
\end{proof}
\begin{proposition}\label{SC}
If  $A\in Obj \; \mathcal{C}$ has $d$-equality property, then $SC(A)=C(A)-1$.
\end{proposition}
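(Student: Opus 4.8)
The plan is to partition the collection of isomorphism classes whose cardinality defines $C(A)$ according to whether the dominated object is isomorphic to $A$, and then to identify the remaining part with the collection defining $SC(A)$.

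First I would recall that $C(A)$ counts the isomorphism classes of objects $X$ with $X\leqslant^d A$. By the definition of proper domination, $X\leqslant^d A$ holds precisely when either $X\cong A$ or $X<^p A$, and these two alternatives are mutually exclusive at the level of isomorphism classes. Since the identity morphism witnesses $A\leqslant^d A$, the isomorphism class of $A$ does occur among those counted by $C(A)$, and it is the unique class satisfying $X\cong A$. Hence $C(A)=1+\kappa$, where $\kappa$ denotes the cardinality of the class of isomorphism classes of objects $X$ with $X<^p A$.

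Next I would show that, under the $d$-equality hypothesis, the predicates $X<^p A$ and $X<^s A$ single out exactly the same isomorphism classes. One inclusion is the general observation made earlier that $X<^s A$ implies $X<^p A$. The reverse inclusion is precisely Lemma \ref{1}: since $A$ has the $d$-equality property, every $X$ properly dominated by $A$ is in fact strongly dominated by $A$. Consequently the class of isomorphism classes of objects $X$ with $X<^p A$ coincides with the class of those with $X<^s A$, whose cardinality is by definition $SC(A)$; thus $\kappa=SC(A)$. Combining the two steps yields $C(A)=1+SC(A)$, that is, $SC(A)=C(A)-1$.

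I expect the main (though modest) obstacle to be bookkeeping rather than anything conceptual: one must verify that the class of $A$ is counted once and only once in $C(A)$ and that it is the single class excluded in passing from domination to strong domination, which is exactly what the $d$-equality property guarantees through Lemma \ref{1}. A minor additional point is the reading of the identity when these cardinalities are infinite; in that case deleting the single class of $A$ does not change the cardinality, so $SC(A)=C(A)$ and the stated relation holds with the usual convention for cardinal subtraction.
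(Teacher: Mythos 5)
Your proof is correct and follows essentially the same route as the paper: the paper's proof is simply ``It follows from Lemma \ref{1}'', and your argument is exactly the bookkeeping that citation leaves implicit --- splitting the classes counted by $C(A)$ into the single class of $A$ and those with $X<^p A$, then using Lemma \ref{1} (together with the trivial implication $X<^s A \Rightarrow X<^p A$) to identify the latter with the classes counted by $SC(A)$. Your remark on the infinite-cardinal reading of $C(A)-1$ is a reasonable extra precaution that the paper does not address.
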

\begin{proof}
It follows from Lemma \ref{1}.
\end{proof}
\begin{lemma}\label{trans}
If $Y\leqslant^d X$ and $X<^s A$ for $A,X,Y\in Obj \; \mathcal{C}$, then $Y<^s A$.
\end{lemma}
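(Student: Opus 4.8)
The plan is to unwind the two defining conditions of $X <^s A$ and combine them with the hypothesis $Y \leqslant^d X$ via the transitivity of domination. Recall that $X <^s A$ means precisely that $X \leqslant^d A$ holds while $A \leqslant^d X$ fails, so to conclude $Y <^s A$ I must establish two things separately: the positive statement $Y \leqslant^d A$, and the negative statement that $A \leqslant^d Y$ fails.

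For the positive part, the first step is to observe that the relation $\leqslant^d$ is transitive, which is the real engine of the argument. Concretely, from $Y \leqslant^d X$ I obtain morphisms $g_1 : X \to Y$ and $f_1 : Y \to X$ with $g_1 \circ f_1 = id_Y$, and from $X \leqslant^d A$ (the first half of $X <^s A$) I obtain $g_2 : A \to X$ and $f_2 : X \to A$ with $g_2 \circ f_2 = id_X$. Setting the composites $g_1 \circ g_2 : A \to Y$ and $f_2 \circ f_1 : Y \to A$, a short associativity computation gives $(g_1 \circ g_2)\circ(f_2 \circ f_1) = g_1 \circ (g_2 \circ f_2) \circ f_1 = g_1 \circ id_X \circ f_1 = g_1 \circ f_1 = id_Y$, so indeed $Y \leqslant^d A$.

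For the negative part I would argue by contradiction. Suppose $A \leqslant^d Y$. Since we are given $Y \leqslant^d X$, the same transitivity of domination (composing the two retraction-section pairs exactly as above, but now along $A \leqslant^d Y \leqslant^d X$) yields $A \leqslant^d X$. This directly contradicts the second half of the hypothesis $X <^s A$, namely that $A \leqslant^d X$ fails. Hence $A \leqslant^d Y$ cannot hold, and combining this with the positive part gives $Y <^s A$, as desired.

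I do not anticipate a genuine obstacle here: the entire content reduces to the transitivity of $\leqslant^d$ under composition of morphisms, which is purely formal in any category. The only point requiring a little care is to apply transitivity in the correct direction for each of the two parts (forward along $Y \leqslant^d X \leqslant^d A$ for the positive conclusion, and forward along $A \leqslant^d Y \leqslant^d X$ inside the contradiction), and to keep the section-retraction composites in the right order so that the relevant composite collapses to an identity.
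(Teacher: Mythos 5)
Your proof is correct and follows essentially the same route as the paper: establish $Y \leqslant^d A$ by transitivity of $\leqslant^d$, then rule out $A \leqslant^d Y$ by contradiction, since it would give $A \leqslant^d X$ against the hypothesis $X <^s A$. The only difference is that you spell out the transitivity via explicit section--retraction composites, which the paper treats as known.
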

\begin{proof}
Since $Y\leqslant^d X$ and $X \leqslant^d A$, then $Y\leqslant^d A$. By contrary, assume that $A\leqslant^d Y$. Then $A\leqslant^d X$ which is a contradiction to $X<^s A$.  Thus we have $Y<^s A$.
\end{proof}
\begin{remark}
From the previous lemma, one can conclude that the relation ``$<^s$'' is a transitive relation. Clearly,  the relation ``$\leqslant^d$'' is also transitive. But the relation ``$<^p$'' is not necessarily transitive. Indeed, as mentioned before, there exist two CW-complexes $X$ and $Y$ for which $X\leqslant^d Y$ and $Y\leqslant^d  X$ but $X\not \simeq Y$. It follows that   $X<^p Y$ and $Y <^p X$ but $X\simeq X$.
\end{remark}
\begin{lemma}\label{joda}
Suppose that $A\in Obj \; \mathcal{C}$. If $X_k <^s \cdots <^s X_1 \leqslant^d X_0 =A$ is an  $s$-chain of length $k$ for $A$, then $X_i \not \cong X_j$ for all $1\leq i\neq j\leq k$.
\end{lemma}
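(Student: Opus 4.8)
The plan is to argue by contradiction, leaning on the transitivity of the relation $<^s$ recorded in the Remark (and which follows from Lemma \ref{trans}). The key elementary observation is that an isomorphism is, in particular, a two-sided domination: if $X_i \cong X_j$, then both $X_i \leqslant^d X_j$ and $X_j \leqslant^d X_i$ hold, since an isomorphism $\phi\colon X_i \to X_j$ together with its inverse witnesses $X_i \leqslant^d X_j$ (via $\phi^{-1}\circ \phi = id_{X_i}$), and symmetrically $X_j \leqslant^d X_i$.

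First I would extract from the given $s$-chain that $X_i <^s X_j$ whenever $1 \leq j < i \leq k$. This is immediate by transitivity: for such $i$ and $j$, the sub-chain $X_i <^s X_{i-1} <^s \cdots <^s X_j$ consists entirely of $<^s$-links, so repeated application of transitivity of $<^s$ yields $X_i <^s X_j$. Note that the only $\leqslant^d$-link of the chain is $X_1 \leqslant^d X_0 = A$; since we compare only indices in $\{1,\dots,k\}$, this weak link is never invoked.

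Then, supposing for contradiction that $X_i \cong X_j$ for some $1 \leq i \neq j \leq k$, I would assume without loss of generality that $i > j$. By the previous step, $X_i <^s X_j$, which by definition means that $X_i \leqslant^d X_j$ holds but $X_j \leqslant^d X_i$ fails. On the other hand, the isomorphism $X_i \cong X_j$ forces $X_j \leqslant^d X_i$, a contradiction. Hence no two of the $X_1,\dots,X_k$ can be isomorphic.

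I do not anticipate a genuine obstacle here; once transitivity of $<^s$ and the ``isomorphism implies mutual domination'' fact are in hand, the argument is essentially bookkeeping. The only point demanding a little care is the indexing, namely ensuring that the final weak link $X_1 \leqslant^d X_0$ is never needed, which is automatic since the claim concerns only the indices between $1$ and $k$.
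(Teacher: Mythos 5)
Your proof is correct and takes essentially the same route as the paper: both use the transitivity of $<^s$ (via Lemma \ref{trans}) to obtain $X_i <^s X_j$ for every pair $j<i$ in $\{1,\dots,k\}$, and then conclude non-isomorphism because an isomorphism would supply the forbidden backward domination $X_j \leqslant^d X_i$. Your explicit remarks on the indexing and on why the weak link $X_1 \leqslant^d X_0$ is never invoked are just slightly more careful bookkeeping than the paper's terser presentation of the identical argument.
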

\begin{proof}
By Lemma \ref{trans}, $X_k <^s X_i$ for all $1\leq i\leq k-1$. Then by definition we have $X_k \not \cong X_i$ for all $1\leq i\leq k-1$. Similarly, one can show that for each $k-1 \leq i \leq 2$, we have $X_i \not \cong X_j$ for all $i+1 \leq j \leq 1$. Thus the proof is finished.
\end{proof}
\begin{proposition}\label{sdc}
For each $A\in Obj\; \mathcal{C}$, $SD(A)\leq \min \{D(A),C(A)\}$.
\end{proposition}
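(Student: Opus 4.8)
The plan is to prove the two inequalities $SD(A)\leq D(A)$ and $SD(A)\leq C(A)$ separately, after which the claim follows at once by taking the minimum.

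First I would establish $SD(A)\leq D(A)$. The key observation is that every $s$-chain is, in particular, a chain: since $X<^s Y$ implies $X<^p Y$ (as noted earlier in this section), any $s$-chain $X_k <^s \cdots <^s X_1 \leqslant^d A$ is simultaneously a chain $X_k <^p \cdots <^p X_1 \leqslant^d A$ of the same length $k$. Hence the class of all $s$-chains for $A$ is contained in the class of all chains for $A$, so the least upper bound of the lengths of the former cannot exceed that of the latter; that is, $SD(A)\leq D(A)$.

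Next I would establish $SD(A)\leq C(A)$. Take an arbitrary $s$-chain $X_k <^s \cdots <^s X_1 \leqslant^d X_0 =A$ of length $k$. By Lemma \ref{joda}, the objects $X_1,\ldots ,X_k$ are pairwise non-isomorphic. Moreover, each of them is dominated by $A$: we have $X_1 \leqslant^d A$ directly, while for $2\leq i\leq k$ repeated use of the transitivity of $\leqslant^d$ (recall that $<^s$ implies $\leqslant^d$) gives $X_i \leqslant^d A$. Thus $X_1,\ldots ,X_k$ represent $k$ distinct isomorphism classes of objects dominated by $A$, whence $C(A)\geq k$. Since this holds for every $s$-chain, taking the least upper bound over all $s$-chains yields $SD(A)\leq C(A)$.

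The argument is essentially a bookkeeping exercise, so I do not anticipate a genuine obstacle; the only point demanding care is the treatment of the infinite case. If there exist $s$-chains of arbitrarily large length, then $SD(A)=\mathcal{N}_0$, and the second step shows that $C(A)$ exceeds every finite $k$, so $C(A)$ is infinite and the inequality $SD(A)\leq C(A)$ still holds; similarly the first step forces $D(A)=\mathcal{N}_0$. Combining the two inequalities in both the finite and infinite cases gives $SD(A)\leq \min\{D(A),C(A)\}$, as desired.
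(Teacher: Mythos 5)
Your proposal is correct and follows essentially the same route as the paper: the inequality $SD(A)\leq D(A)$ is immediate since $<^s$ implies $<^p$, and the bound $SD(A)\leq C(A)$ rests on Lemma \ref{joda} together with transitivity of $\leqslant^d$, exactly as in the paper's proof. The only difference is cosmetic: you argue directly that every $s$-chain of length $k$ forces $C(A)\geq k$, whereas the paper phrases the same counting argument as a proof by contradiction assuming $C(A)=k<\infty$.
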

\begin{proof}
By definition, it is clear that $SD(A)\leq D(A)$. Suppose that $C(A)=k<\infty$. By contrary, assume that there is the following  $s$-chain for $A$
\[
X_{k+1}<^s X_k <^s \cdots <^s X_1 \leqslant^d X_0 =A .
\]
 Since the relation ``$\leqslant^d$'' is transitive, so $X_i \leqslant^d A$ for all $i=1,\cdots ,k+1$. By Lemma \ref{joda}, $X_i$'s are mutually non-isomorphic for all $i=1,\cdots ,k+1$. It follows that $C(A)\geq k+1$ which is a contradiction to the hypothesis. Then there is no $s$-chain of  length $k+1$ for $A$. This proves that $SD(A)\leq k$.
\end{proof}
\begin{proposition}\label{2.11}
Suppose that $A\in Obj\; \mathcal{C}$. If $D(A)=k$, then each sequence $\cdots \leqslant^d X_i \leqslant^d \cdots  \leqslant^d X_1 \leqslant^d A$ contains at most $k$ different objects (up to isomorphism).
\end{proposition}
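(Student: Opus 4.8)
The plan is to turn the given $\leqslant^d$-descending sequence into a genuine proper chain for $A$ and then read the bound off its length. Relabel $X_0 = A$, so the hypothesis is $X_{i+1} \leqslant^d X_i$ for all $i \geq 0$; since $\leqslant^d$ is transitive (as noted in the remark following Lemma \ref{trans}), we get $X_j \leqslant^d X_i$ whenever $j \geq i$. The difficulty is that $<^p$ is \emph{not} transitive, so one cannot simply erase repeated terms and hope the surviving relations are proper. The way around this is to produce the survivors so that consecutive ones are non-isomorphic, securing domination between them via $\leqslant^d$ and only afterwards upgrading to $<^p$.

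Concretely, I would pass to the subsequence of isomorphism-type ``jumps''. Set $n_0 = 0$ and, recursively, $n_{t+1} = \min\{\, j > n_t : X_j \not\cong X_{n_t}\,\}$ whenever this set is nonempty. By minimality, every index $j$ with $n_t \leq j < n_{t+1}$ satisfies $X_j \cong X_{n_t}$, so each isomorphism type occurring anywhere in the sequence already occurs among the chosen terms $X_{n_0}, X_{n_1}, \ldots$; it therefore suffices to bound the number of these terms. Moreover, for each admissible $t$ we have $X_{n_{t+1}} \leqslant^d X_{n_t}$ by transitivity and $X_{n_{t+1}} \not\cong X_{n_t}$ by construction, which is exactly $X_{n_{t+1}} <^p X_{n_t}$.

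Now I would rule out an infinite subsequence: if it were infinite, then for every $j$ the string $X_{n_j} <^p \cdots <^p X_{n_1} <^p X_{n_0} = A$ is a chain of length $j+1$ for $A$ (reading the top term $A = X_{n_0}$ as satisfying the trivial relation $A \leqslant^d A$), which forces $D(A) = \mathcal{N}_0$ and contradicts $D(A) = k$. Hence the subsequence is finite, say $X_{n_0}, X_{n_1}, \ldots, X_{n_p}$, and $X_{n_p} <^p \cdots <^p X_{n_1} <^p X_{n_0} = A$ is a chain of length $p+1$ for $A$, so $p + 1 \leq D(A) = k$. Since every isomorphism type in the sequence is represented among $X_{n_0}, \ldots, X_{n_p}$, the number of distinct objects up to isomorphism is at most $p+1 \leq k$, as required.

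I expect the genuinely delicate points to be exactly these two: the collapsing step, which must circumvent the non-transitivity of $<^p$ by routing domination through $\leqslant^d$ before invoking non-isomorphism, and the bookkeeping of the chain length. For the latter, it is essential to count the top term $A = X_{n_0}$ itself as one of the objects $X_1, \ldots, X_{p+1}$ of the chain, giving length $p+1$ rather than $p$; this extra unit is precisely what sharpens the conclusion from the weaker $p+1 \leq k+1$ to the stated $p+1 \leq k$.
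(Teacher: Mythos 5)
Your proof is correct, but your selection procedure differs from the paper's. The paper argues by pure contradiction and skips the jump construction entirely: assuming the sequence contains $k+1$ different objects up to isomorphism, it picks pairwise non-isomorphic representatives $X_{i_{k+1}},\dots,X_{i_1}$ and strings them together --- transitivity of $\leqslant^d$ gives domination between consecutive representatives, and pairwise non-isomorphism makes every link proper --- yielding a chain of length $k+1$ for $A$ and contradicting $D(A)=k$. You instead build the canonical subsequence of first isomorphism-type changes and bound its length directly, which is sound but forces two pieces of extra work the paper's route avoids: (i) your jump terms are only \emph{consecutively} non-isomorphic (the sequence could in principle alternate between two $d$-equal, non-isomorphic types), so you must separately dispose of the infinite-jump case, whereas representatives of distinct types are pairwise non-isomorphic by fiat and that case never arises; and (ii) you need the anchoring trick $X_{n_0}=A$ via the trivial relation $A\leqslant^d A$ to gain the final unit of length, while the paper needs no anchoring because a chain of length $k+1$ built from the $X_i$'s alone already contradicts $D(A)=k$. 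In compensation, your version proves something marginally stronger --- your bound of $k$ counts the isomorphism type of $A$ itself alongside those of the $X_i$'s --- and it makes explicit a subtlety the paper leaves implicit, namely that the non-transitivity of $<^p$ (noted in the remark after Lemma \ref{trans}) forbids naively erasing repeated terms, so that properness of each link must be routed through $\leqslant^d$ plus non-isomorphism exactly as both proofs do.
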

\begin{proof}
By contrary, assume that there exists a sequence $\cdots \leqslant^d X_i \leqslant^d \cdots \leqslant^d X_1 \leqslant^d A$ contains at least $k+1$ different objects (up to isomorphism), called $X_{i_{k+1}},X_{i_k},\cdots ,X_{i_1}$ with $i_{k+1}<i_{k}<\cdots <i_1$. It follows that the sequence $X_{i_{k+1}}<^p X_{i_k}<^p \cdots <^p X_{i_1}\leqslant^d A$  is a chain of length $k+1$ for $A$ which is a contradiction to $D(A)=k$.
\end{proof}
\begin{proposition}\label{7575}
Suppose that $A\in Obj \; \mathcal{C}$. If there exists an integer $k$ such that each sequence $\cdots \leqslant^d X_i \leqslant^d \cdots  \leqslant^d X_1 \leqslant^d A$ contains at most $k$ different objects (up to isomorphism), then $SD(A)\leq k$.
\end{proposition}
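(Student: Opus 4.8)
The plan is to argue by contradiction, in exactly the spirit of the proof of Proposition \ref{sdc}, by converting a hypothetical overly long $s$-chain into a $\leqslant^d$-sequence that carries too many pairwise non-isomorphic terms. First I would assume, contrary to the assertion, that $SD(A)>k$. By the characterization of strong depth recorded above, this means there exists an $s$-chain of length $k+1$ for $A$, say
\[
X_{k+1}<^s X_k <^s \cdots <^s X_1 \leqslant^d X_0 =A .
\]

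Next I would pass from this $s$-chain to an ordinary descending $\leqslant^d$-sequence. Since $X_{i+1}<^s X_i$ entails $X_{i+1}\leqslant^d X_i$ by definition, the chain above yields
\[
X_{k+1}\leqslant^d X_k \leqslant^d \cdots \leqslant^d X_1 \leqslant^d A ,
\]
which is precisely a sequence of the form considered in the hypothesis. The key step is then to count its distinct terms: by Lemma \ref{joda} the objects $X_1,\ldots ,X_{k+1}$ occurring in an $s$-chain are mutually non-isomorphic, so this $\leqslant^d$-sequence contains at least $k+1$ objects that are pairwise distinct up to isomorphism. This directly contradicts the hypothesis that every such sequence contains at most $k$ different objects (up to isomorphism).

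Consequently no $s$-chain of length $k+1$ for $A$ can exist, and therefore $SD(A)\leq k$, as required. I expect the argument to be essentially routine once Lemma \ref{joda} is invoked; the only point that deserves care is the reading of the hypothesis, namely that the displayed (possibly terminating) sequences are permitted to be finite, so that a finite $s$-chain genuinely qualifies as one of the sequences to which the bound $k$ applies. This is the mild subtlety that the converse implication, Proposition \ref{2.11}, settles in the opposite direction, so together the two propositions exhibit the precise equivalence between the strong depth and the maximal number of non-isomorphic objects along a $\leqslant^d$-sequence.
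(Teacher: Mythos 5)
Your proof is correct and follows essentially the same route as the paper: argue by contradiction from a hypothetical $s$-chain of length $k+1$, invoke Lemma \ref{joda} to get $k+1$ pairwise non-isomorphic objects in the resulting $\leqslant^d$-sequence, and contradict the hypothesis. The only cosmetic difference is the point you yourself flag about finite versus infinite sequences: where you resolve it by reading the hypothesis as admitting terminating sequences, the paper sidesteps the issue by padding, setting $X_i = X_{k+1}$ for all $i\geq k+1$ to produce a genuinely infinite $\leqslant^d$-sequence containing at least $k+1$ different objects (up to isomorphism).
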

\begin{proof}
By contrary, assume that there exists the following  $s$-chain  for $A$
\[
X_{k+1} <^s \cdots <^s X_1 \leqslant^d A.
\]
By Lemma \ref{joda}, we have $X_i \not \cong X_j$ for all $1\leq i\neq j\leq k+1$.  Then  the sequence
\[
\cdots \leqslant^d X_i \leqslant^d \cdots \leqslant^d X_1 \leqslant^d A
\]
with $X_i =X_{k+1}$ for all $i\geq k+1$, contains at least $k+1$ different objects (up to isomorphism) which is a contradiction to the hypothesis. Therefore, $SD(A)\leq k$.
\end{proof}
\begin{lemma}\label{deq}
Suppose that $A\in Obj\; \mathcal{C}$. If every $X\leqslant^d A$ has $d$-equality property, then every chain for $A$ is an $s$-chain.
\end{lemma}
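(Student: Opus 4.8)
The plan is to take an arbitrary chain $X_k <^p \cdots <^p X_1 \leqslant^d A$ for $A$ and upgrade each proper-domination link to a strong-domination link. The tool for this is Lemma \ref{1}, which asserts that whenever an object has the $d$-equality property, any object properly dominated by it is in fact strongly dominated by it. Thus the entire argument reduces to verifying that each $X_i$ occurring in the chain enjoys the $d$-equality property, and then applying Lemma \ref{1} link by link.

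First I would establish that $X_i \leqslant^d A$ for every $i = 1, \ldots, k$. This follows from transitivity of the relation ``$\leqslant^d$'' (already noted in the Remark above): from each link $X_{i+1} <^p X_i$ we have $X_{i+1} \leqslant^d X_i$, and concatenating these with $X_1 \leqslant^d A$ gives $X_i \leqslant^d X_{i-1} \leqslant^d \cdots \leqslant^d X_1 \leqslant^d A$, whence $X_i \leqslant^d A$. By the hypothesis of the lemma, every object dominated by $A$ has the $d$-equality property, so in particular each $X_i$ does.

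Next I would run through the chain step by step. Fix $i \in \{1, \ldots, k-1\}$ and consider the link $X_{i+1} <^p X_i$. Since $X_i$ has the $d$-equality property and $X_{i+1}$ is properly dominated by $X_i$, Lemma \ref{1}, applied with $X_i$ in the role of $A$, yields $X_{i+1} <^s X_i$. Performing this replacement at every link converts the original chain into $X_k <^s \cdots <^s X_1 \leqslant^d A$, which is precisely an $s$-chain for $A$, as desired.

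There is essentially no serious obstacle in this argument; its content lies entirely in correctly invoking Lemma \ref{1} at each link and in observing that transitivity of ``$\leqslant^d$'' places every $X_i$ within the scope of the hypothesis. The only point that requires a moment's care is that the $d$-equality property is needed for the \emph{dominating} object at each step, namely $X_i$, rather than for $A$ itself; the transitivity observation of the second paragraph is exactly what guarantees that each such $X_i$ is covered by the assumption.
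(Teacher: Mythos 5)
Your proof is correct and follows essentially the same route as the paper: the paper's own argument upgrades each link $X_{i}<^{p}X_{i-1}$ by assuming $X_{i-1}\leqslant^{d}X_{i}$ and deriving $X_{i}\cong X_{i-1}$ from the $d$-equality property, which is precisely the content of Lemma \ref{1} that you invoke explicitly. Your additional observation that transitivity of ``$\leqslant^{d}$'' is what places each $X_i$ under the lemma's hypothesis is a point the paper leaves implicit, and it is correctly handled.
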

\begin{proof}
 Asume that $X_k <^p \cdots <^p X_1 \leqslant^d A$ is a chain of length $k$ for $A$. Assume that $X_{i-1} \leq X_{i}$ for some $2\leq i\leq k$. Since $X_i$ has $d$-equality property by the hypothesis, then $X_i \cong X_{i-1}$ which is a contradiction to $X_i <^p X_{i-1}$. So $X_{i}<^s X_{i-1}$ for every $2\leq i\leq k+1$. Hence
\[
X_{k+1}<^s X_k <^s \cdots <^s X_1 \leqslant^d A
\]
is an $s$-chain of  length $k$ for $A$.
\end{proof}
Note that by \cite[Theorem 3]{K1}, if $A\in ANR$ and $\pi_1 (A)$ is virtually polycyclic, then every $X\leqslant^d A$ has $d$-equality property.
\begin{proposition}\label{e}
Suppose that $A\in Obj\; \mathcal{C}$. If every $X\leqslant^d A$ has $d$-equality property, then $D(A)=SD(A)$.
\end{proposition}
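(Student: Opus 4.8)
The plan is to establish the two inequalities $SD(A) \leq D(A)$ and $D(A) \leq SD(A)$ separately, the first of which is essentially free. First I would invoke Proposition \ref{sdc}, which already gives $SD(A) \leq D(A)$ unconditionally; this direction reflects the general fact that every $s$-chain is a chain, since $X <^s Y$ implies $X <^p Y$. Thus the entire burden of the proposition lies in the reverse inequality, and this is precisely where the hypothesis that every $X \leqslant^d A$ has $d$-equality property will be used.

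For the reverse inequality, the key observation is that, under this hypothesis, the class of chains for $A$ and the class of $s$-chains for $A$ actually coincide. This is exactly the content of Lemma \ref{deq}: every chain $X_k <^p \cdots <^p X_1 \leqslant^d A$ is in fact an $s$-chain $X_k <^s \cdots <^s X_1 \leqslant^d A$. Consequently, each chain of length $k$ is simultaneously an $s$-chain of length $k$, so any length realized by some chain for $A$ is also realized by some $s$-chain for $A$. Taking suprema over the two (now identical) collections of chains then forces $D(A) \leq SD(A)$, and combining with the first inequality yields $D(A) = SD(A)$.

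The only point requiring a little care is the infinite case: if $D(A) = \mathcal{N}_0$, there exist chains of arbitrarily large finite length, and since each such chain is an $s$-chain by Lemma \ref{deq}, the $s$-chains also have unbounded length, giving $SD(A) = \mathcal{N}_0$ as well. Since the substantive work has already been carried out in Lemma \ref{deq}, I do not anticipate any genuine obstacle in this proof; it amounts to a careful bookkeeping of the definitions, the main thing to verify being simply that the two ``least upper bound of lengths'' quantities are computed over the same collection of chains.
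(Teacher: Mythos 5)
Your proof is correct and follows essentially the same route as the paper, whose entire proof of this proposition is a one-line appeal to Lemma \ref{deq}; you simply make explicit the easy direction $SD(A)\leq D(A)$ (via Proposition \ref{sdc}) and the bookkeeping over suprema, including the infinite case, which the paper leaves implicit.
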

\begin{proof}
It follows from Lemma \ref{deq}.
\end{proof}
\begin{lemma}\label{2}
Suppose that $A\in Obj\; \mathcal{C}$. If $D(A)<\infty$, then every $X\leqslant^d A$ has  $d$-equality property.
\end{lemma}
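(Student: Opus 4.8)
The plan is to argue by contradiction: I will show that if some object dominated by $A$ fails the $d$-equality property, then $A$ admits chains of every finite length, forcing $D(A)=\mathcal{N}_0$. So I assume $D(A)<\infty$ and suppose, toward a contradiction, that there is an $X\leqslant^d A$ lacking the $d$-equality property. Unwinding the definition of that property, this yields a $Y\in Obj\;\mathcal{C}$ with $Y=_d X$ (that is, $X\leqslant^d Y$ and $Y\leqslant^d X$) but $Y\not\cong X$.

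The crucial step is to observe that this pair is mutually \emph{properly} dominated. Indeed, from $X\leqslant^d Y$ together with $X\not\cong Y$ we get $X<^p Y$, and symmetrically $Y<^p X$. This is exactly the failure of antisymmetry for $<^p$ recorded in the Remark following Lemma \ref{trans}; I would use this two-element ``cycle'' as an engine for producing arbitrarily long chains.

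Concretely, fix an arbitrary $n\in\mathbb{N}$ and set $X_i=X$ for odd $i$ and $X_i=Y$ for even $i$. Then
\[
X_n <^p X_{n-1} <^p \cdots <^p X_2 <^p X_1 \leqslant^d A
\]
is a chain of length $n$ for $A$: the final relation is $X_1=X\leqslant^d A$, which holds by hypothesis, and every consecutive step is either $X<^p Y$ or $Y<^p X$, both of which were verified above. Since $n$ is arbitrary, the lengths of chains for $A$ are unbounded, whence $D(A)=\mathcal{N}_0$, contradicting $D(A)<\infty$. Therefore no such $X$ can exist, and every $X\leqslant^d A$ must have the $d$-equality property.

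The one point that makes the whole argument work — and the only place any care is needed — is recognizing that $<^p$ need not be antisymmetric, so that a single $d$-equal but non-isomorphic pair already suffices to build chains of every length by mere alternation. No additional hypotheses on $\mathcal{C}$ or on the structure of $A$ are required beyond the given domination $X\leqslant^d A$; in effect the statement is a contrapositive strengthening of the very examples used earlier to separate $<^p$ from $<^s$.
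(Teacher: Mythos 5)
Your proof is correct and rests on essentially the same device as the paper's: an alternating chain built from a mutually dominating but non-isomorphic pair, exploiting that $<^p$ permits such two-element cycles. The only difference is organizational — you anchor the alternating chain at $X$ directly via $X\leqslant^d A$, handling all dominated objects in one stroke, whereas the paper first treats the case $X=_d A$ with the alternating chain $A<^p X<^p A<^p\cdots<^p A$ and then extends to arbitrary $X\leqslant^d A$ by invoking the monotonicity ``$Y\leqslant^d X$ implies $D(Y)\leq D(X)$'', a step your version renders unnecessary.
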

\begin{proof}
Suppose that $D(A)=k$. First, we show that $A$ itself has $d$-equality property.  By contrary, assume that there exists $X\in Ob\; \mathcal{C}$  for which $X\leqslant^d A$ and $A\leqslant^d X$ but $A\not \cong X$. Then the sequence
\[
\underbrace{A<^p X<^p A<^p \cdots <^p X<^p A}_{\text{of length}\; k+1},
\]
is a chain of length $k+1$ for $A$ which is a contradiction to $D(A)=k$.

Now, we know that
\[
``Y\leqslant^d X \quad \text{implies}\quad D(Y)\leq D(X)\text{''}
\]
and so the result holds.
\end{proof}
\begin{corollary}
Suppose that $A\in Obj\; \mathcal{C}$. If $D(A)<\infty$, then $D(A)=SD(A)$.
\end{corollary}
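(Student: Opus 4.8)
The plan is to recognize that this corollary is an immediate consequence of the two results that directly precede it, namely Lemma~\ref{2} and Proposition~\ref{e}, and so the only work is to chain them together correctly. I would begin by invoking the hypothesis $D(A) < \infty$ to trigger Lemma~\ref{2}, which states precisely that when $D(A)$ is finite, every object $X \leqslant^d A$ enjoys the $d$-equality property. This converts the single finiteness assumption on $A$ into a uniform structural condition on the entire downward-dominated class of $A$.

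Having secured that every $X \leqslant^d A$ has the $d$-equality property, I would then feed this conclusion directly into the hypothesis of Proposition~\ref{e}, which asserts that exactly this condition forces $D(A) = SD(A)$. Composing the two implications yields the desired equality, so the entire argument is a one-line deduction: Lemma~\ref{2} supplies the hypothesis of Proposition~\ref{e}, and Proposition~\ref{e} supplies the conclusion.

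Since the substance has already been established upstream, I do not anticipate a genuine obstacle here. The only point deserving care is to confirm that the two results compose without hidden gaps: Lemma~\ref{2} must deliver the $d$-equality property for \emph{all} objects dominated by $A$ (not merely for $A$ itself), and Proposition~\ref{e} must require no more than that. Reading their statements confirms the match exactly, so the proof reduces to citing both in sequence.

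\begin{proof}
Since $D(A)<\infty$, Lemma~\ref{2} implies that every $X\leqslant^d A$ has $d$-equality property. Hence, by Proposition~\ref{e}, we conclude that $D(A)=SD(A)$.
\end{proof}
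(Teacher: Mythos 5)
Your proof is correct and matches the paper's own argument exactly: the paper likewise deduces the corollary by chaining Lemma~\ref{2} (finiteness of $D(A)$ gives $d$-equality for every $X\leqslant^d A$) into Proposition~\ref{e} (which yields $D(A)=SD(A)$). No gaps or differences to note.
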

\begin{proof}
It is concluded from Lemma \ref{2} and Proposition \ref{e}.
\end{proof}
\section{The (strong) capacities and (strong) depths of some classes of groups}
Recall that a homomorphism $g :G\longrightarrow H$ of groups is an $r$-homomorphism if there
exists a (converse) homomorphism $f :H\longrightarrow G$ such that $g\circ f = id_H$. Then $H$ is called  an $r$-image of $G$. Note that in the categroy of groups, concepts of the domination and the $r$-homomorphism are the same. So the group $H$ is an $r$-image of the group $G$ if and only if $H\leqslant^d G$. Also, if $H$ is (strognly) properly dominated by $G$, then we say that $H$ is an ($s$-image) proper $r$-image of $G$.

Recall that an endomorphism $h:G\longrightarrow G$ of a group $G$ is called an idempotent if $h\circ h=h$.  It is clear that if $H$ is an $r$-image of $G$ with $r$-homomorphism $g :G\longrightarrow H$ and converse homomorphism $f :H \longrightarrow G$, then $f\circ g:G\longrightarrow G$ is an idempotent. By \cite[Corollary]{Hol}, $C(G)\leq e(G)$, where $e(G)$ denotes the number of idempotent endomorphisms on $G$.
\begin{proposition}
Let $G$ be a finite group of order $n$. Then $C(G)\leq n^{(\frac{1}{2}+o(1))\log_2 (n)}$.
\end{proposition}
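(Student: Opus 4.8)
The plan is to estimate $C(G)$ through the number $e(G)$ of idempotent endomorphisms of $G$, using the inequality $C(G)\le e(G)$ recorded above from \cite{Hol}; thus everything reduces to proving $e(G)\le n^{(\frac{1}{2}+o(1))\log_2 n}$. The naive estimate $e(G)\le|\mathrm{End}(G)|\le n^{d(G)}\le n^{\log_2 n}$, where $d(G)$ is the minimal number of generators of $G$ and the bound $d(G)\le\log_2 n$ holds because each new generator at least doubles the order of the subgroup generated, only yields the exponent $\log_2 n$; the whole point of the theorem is to save the factor $\frac{1}{2}$.

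First I would exploit the structure of an idempotent $h$. Since $h\circ h=h$, every element $y=h(x)$ of the image satisfies $h(y)=h^2(x)=h(x)=y$, so $h$ restricts to the identity on $\mathrm{im}\,h$; moreover $\ker h\trianglelefteq G$, $\ker h\cap\mathrm{im}\,h=\{e\}$, and the identity $g=(g\,h(g)^{-1})\,h(g)$ exhibits $G=\ker h\rtimes\mathrm{im}\,h$. Consequently $h$ is nothing but the projection onto $\mathrm{im}\,h$ with kernel $\ker h$, and hence is \emph{uniquely determined} by the pair $(\ker h,\mathrm{im}\,h)$: if $g=k\,u$ with $k\in\ker h$ and $u\in\mathrm{im}\,h$, then $h(g)=u$. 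Therefore the assignment $h\mapsto(\ker h,\mathrm{im}\,h)$ is an injection from the set of idempotent endomorphisms into the set of pairs of subgroups of $G$, which gives the clean inequality
\[
e(G)\le s(G)^2,
\]
where $s(G)$ denotes the total number of subgroups of $G$.

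It then remains to feed in a sharp bound on the number of subgroups, namely $s(G)\le n^{(\frac{1}{4}+o(1))\log_2 n}$. Squaring this and combining with $C(G)\le e(G)$ yields $C(G)\le e(G)\le n^{(\frac{1}{2}+o(1))\log_2 n}$, as desired. The constant $\frac{1}{4}$ is exactly what produces the final constant $\frac{1}{2}$, and it is best possible: for the elementary abelian $2$-group of order $n=2^k$ one has $s(G)=\sum_r\binom{k}{r}_2\approx 2^{k^2/4}=n^{\frac{1}{4}\log_2 n}$ and correspondingly about $n^{\frac{1}{2}\log_2 n}$ idempotents, the rank-$r$ ones being parametrized by an image subspace and a complementary kernel, each contributing roughly $2^{r(k-r)}$ choices.

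The main obstacle is precisely this subgroup-counting input: the elementary doubling argument only gives $s(G)\le n^{\log_2 n}$, which after squaring is far too weak, so one must invoke (or reprove) the classical refinement $s(G)\le n^{(\frac{1}{4}+o(1))\log_2 n}$, obtained by counting subgroups through strictly increasing generating flags rather than through arbitrary generating tuples. Everything else---the reduction to $e(G)$, the semidirect-product description of idempotents, and the injection $h\mapsto(\ker h,\mathrm{im}\,h)$---is elementary and routine.
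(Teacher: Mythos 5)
Your proposal is correct and follows essentially the same route as the paper: reduce $C(G)\le e(G)$ via Holszty\'nski, bound idempotents by pairs of subgroups through the semidirect decomposition $G=\ker h\rtimes\mathrm{im}\,h$, and invoke the Borovik--Pyber--Shalev bound $s(G)\le n^{(\frac{1}{4}+o(1))\log_2 n}$ before squaring. The only difference is that you spell out the injection $h\mapsto(\ker h,\mathrm{im}\,h)$, which the paper dismisses with ``clearly''---a welcome bit of rigor, but not a different argument.
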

\begin{proof}
Clearly, $e(G)$  is equal to the number of pairs of subgroups $H,K$ such that $G$  is the semidirect product of $H$ and $K$. By \cite[Corollary 1.6]{Bor}, the number of subgroups of $G$ is bounded by $n^{(\frac{1}{4}+o(1))\log_2 (n)}$. Squaring gives an upper bound on the total number of pairs $(H,K)$ and hence on $e(G)$ as follows:
\[
e(G)\leq n^{(\frac{1}{2}+o(1))\log_2 (n)}.
\]
By the fact that $C(G)\leq e(G)$, the proof is finished.
\end{proof}

Recall that a group $G$ is Hopfian if every epimorphism $f :G\longrightarrow G$ is an automorphism
(equivalently, $N=1$ is the only normal subgroup for which $G/N\cong G$).
\begin{proposition}\label{Hop}
 Every Hopfian group has  $d$-equality property.
\end{proposition}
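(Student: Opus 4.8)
The plan is to unwind the definition of the $d$-equality property in the category of groups and then apply Hopficity to a suitable surjective endomorphism of $G$. Suppose $H\in Obj\;\mathcal{C}$ satisfies $H=_d G$, that is, $H\leqslant^d G$ and $G\leqslant^d H$. By the identification of domination with the existence of an $r$-homomorphism recalled just before Proposition \ref{Hop}, the relation $H\leqslant^d G$ supplies homomorphisms $g_1:G\longrightarrow H$ and $f_1:H\longrightarrow G$ with $g_1\circ f_1=id_H$, while $G\leqslant^d H$ supplies $g_2:H\longrightarrow G$ and $f_2:G\longrightarrow H$ with $g_2\circ f_2=id_G$. The first elementary observation I would use is that any homomorphism admitting a right inverse must be surjective; hence both $g_1$ and $g_2$ are epimorphisms.

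Next I would form the composite $g_2\circ g_1:G\longrightarrow G$. Being a composition of two epimorphisms, it is itself an epimorphism of $G$, so the hypothesis that $G$ is Hopfian forces $g_2\circ g_1$ to be an automorphism of $G$; in particular it is a monomorphism. From this I would extract the injectivity of $g_1$ alone: if $g_1(x)=g_1(y)$, then applying $g_2$ gives $(g_2\circ g_1)(x)=(g_2\circ g_1)(y)$, whence $x=y$ by the injectivity of the composite. Thus $g_1$ is injective. Since $g_1$ is also surjective (it is right-invertible via $f_1$), it is a bijective homomorphism, i.e. an isomorphism $G\cong H$. As $H$ was an arbitrary object $d$-equal to $G$, this is exactly the assertion that $G$ has the $d$-equality property.

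The proof is short, and the only genuine subtlety I anticipate is selecting the correct composite to feed into the Hopfian condition. Hopficity is a statement about \emph{epimorphisms} of $G$, so one must compose the two right-invertible maps $g_1$ and $g_2$, which are surjective, rather than the injective comparison maps $f_1$ and $f_2$; composing the latter would produce a monomorphism to which Hopficity does not directly apply. Once the surjective composite is in hand, everything else is a routine cancellation argument, and no further structure of the groups involved is needed.
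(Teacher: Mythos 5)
Your proof is correct and follows essentially the same route as the paper's: both form the surjective composite $g_2\circ g_1:G\longrightarrow G$, invoke Hopficity to get an automorphism, and then deduce that the right-invertible epimorphism $g_1$ is also a monomorphism, hence an isomorphism $G\cong H$. Your remark about choosing the epimorphic composite rather than the monomorphic one $f_1\circ f_2$ is a fair articulation of the one small subtlety, but it is the identical argument.
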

\begin{proof}
Assume that $G$ is a Hopfian group. Also, assume that for a group $H$, we have $G\leqslant^r H$ and $H\leqslant^r G$. Then there are homomorphisms $f_1 :H\longrightarrow G$ and $g_1 :G\longrightarrow H$ with $g_1 \circ f_1 =id_H$ and homomorphisms $f_2 :G\longrightarrow H$ and $g_2 :H\longrightarrow G$ with $g_2 \circ f_2 =id_G$.  It follows that $g_2 \circ g_1 :G\longrightarrow G$ is an epimorphism and so an isomorphism ($G$ is Hopfian). Now one can conclude that $g_1$ is a monomorphism and so an isomorphism, hence $G\cong H$.
\end{proof}
\begin{corollary}\label{Hopf}
For a Hopfian group $G$, $SC(G)=C(G)-1$.
\end{corollary}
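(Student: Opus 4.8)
The plan is to chain together the two immediately preceding results, since the corollary is a direct consequence of them applied to a Hopfian group. First I would invoke Proposition \ref{Hop}, which asserts that every Hopfian group has the $d$-equality property; thus, because $G$ is Hopfian by hypothesis, $G$ itself has the $d$-equality property. This is the only group-theoretic input the argument requires, and all of the Hopfian hypothesis is absorbed at this single step.

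With the $d$-equality property for $G$ in hand, I would then apply Proposition \ref{SC}, which states that any $A \in Obj\; \mathcal{C}$ with the $d$-equality property satisfies $SC(A) = C(A) - 1$. Specializing $A = G$ (viewed as an object of the category of groups) yields $SC(G) = C(G) - 1$ immediately, completing the argument. In other words, the corollary is nothing more than the composite implication
\[
G \text{ Hopfian} \;\Longrightarrow\; G \text{ has } d\text{-equality} \;\Longrightarrow\; SC(G) = C(G) - 1,
\]
where the first arrow is Proposition \ref{Hop} and the second is Proposition \ref{SC}.

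Since both of the required implications have already been established in the excerpt, there is no genuine obstacle to overcome here; the only thing to verify is that the category of groups is a legitimate instance of the arbitrary category $\mathcal{C}$ in which the capacity $C$ and strong capacity $SC$ were defined, and that the notion of domination $\leqslant^d$ coincides with the $r$-homomorphism relation $\leqslant^r$ used in the Hopfian discussion. This identification was recorded at the start of Section 3, so no further work is needed. Consequently, the proof is a one-line deduction rather than an independent argument, and I would present it simply as "It follows from Proposition \ref{Hop} and Proposition \ref{SC}."
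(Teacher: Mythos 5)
Your proof is correct and matches the paper's own argument, which simply cites Propositions \ref{SC} and \ref{Hop}: the Hopfian hypothesis yields the $d$-equality property, which in turn gives $SC(G)=C(G)-1$. Your added remark about identifying $\leqslant^d$ with the $r$-homomorphism relation in the category of groups is a harmless sanity check already recorded at the start of Section 3.
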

\begin{proof}
It follows from Propositions \ref{SC} and \ref{Hop}.
\end{proof}
Now, we compute the (strong) capacities and (strong) depths of free groups of finite ranks and finitely generated abelian groups.
\begin{lemma}\label{11}
Let $F$ be a free group of finite rank. If $H$ is an $r$-image (a proper $r$-image) of $F$, then $H$ is a free group with $rank (H)\leq rank(F)$ $(rank (H)< rank (F))$.
\end{lemma}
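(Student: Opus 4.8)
The plan is to exploit the two homomorphisms packaged in the domination $H \leqslant^d F$ and to reduce the rank comparison to linear algebra over $\mathbb{Z}$ after abelianizing. First I would unwind the definition: saying that $H$ is an $r$-image of $F$ means there are homomorphisms $g : F \longrightarrow H$ and $f : H \longrightarrow F$ with $g \circ f = id_H$. Since $g \circ f$ is injective, $f$ is a monomorphism, so $f(H)$ is a subgroup of $F$ isomorphic to $H$. By the Nielsen--Schreier theorem every subgroup of a free group is free, whence $H \cong f(H)$ is free. This already yields the freeness assertion, and it remains only to control the rank.

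For the rank bound I would pass to abelianizations. Write $n = rank(F)$ and $m = rank(H)$, so that $F^{ab} \cong \mathbb{Z}^n$ and $H^{ab} \cong \mathbb{Z}^m$. Abelianization is functorial, so the identity $g \circ f = id_H$ induces $g^{ab} \circ f^{ab} = id_{\mathbb{Z}^m}$; thus $f^{ab} : \mathbb{Z}^m \longrightarrow \mathbb{Z}^n$ is a split monomorphism, exhibiting $\mathbb{Z}^m$ as a direct summand of $\mathbb{Z}^n$. Tensoring with $\mathbb{Q}$ turns $f^{ab}$ into an injective $\mathbb{Q}$-linear map $\mathbb{Q}^m \hookrightarrow \mathbb{Q}^n$, which forces $m \leq n$, that is, $rank(H) \leq rank(F)$.

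Finally, for the proper case $H <^p F$ I would argue by contradiction: if $rank(H) = rank(F)$, then, being free groups of the same finite rank, $H$ and $F$ would be isomorphic, contradicting $H \not\cong F$. Hence $rank(H) < rank(F)$, as required. The only genuinely substantive ingredient is the Nielsen--Schreier theorem, which I invoke as a black box; once freeness is in hand, the rank inequality is elementary through the abelianization-and-$\mathbb{Q}$-tensoring step, and no subtlety arises in the finite-rank setting because the isomorphism type of a free group is completely determined by its rank.
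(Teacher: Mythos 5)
Your proof is correct and follows essentially the same route as the paper's: abelianize the splitting $g \circ f = id_H$ to embed $H/H'$ as a (split) subgroup of the free abelian group $F/F'$ and compare ranks, then handle the proper case by noting free groups of equal finite rank are isomorphic. Your only departure is a welcome one of explicitness --- you justify the freeness of $H$ via the injectivity of $f$ and the Nielsen--Schreier theorem, a step the paper uses tacitly.
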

\begin{proof}
Put $rank(F)=k$. Assume that $H$ is an $r$-image of $F$ with  $r$-homomorphism $g:F\longrightarrow H$ and  converse homomorphism $f:H\longrightarrow F$. Define homomorphisms $\bar{g}:\frac{F}{F'}\longrightarrow \frac{H}{H'}$ and $\bar{f}:\frac{H}{H'}\longrightarrow \frac{F}{F'}$ by $\bar{g}(xF')=g(x)H'$ and $\bar{f}(xH')=f(x)F'$, respectively. Since $g\circ f=id_H$, then $\bar{g}\circ \bar{f}=id_{\frac{H}{H'}}$. It follows that $\frac{H}{H'}$ is isomorphic to a subgroup of the free abelian group $\frac{F}{F'}$. But we know that the ranks of free groups $F$ and $H$ are equal to the ranks of free abelian groups $\frac{F}{F'}$ and $\frac{H}{H'}$, respectively. On the other hand, $rank (\frac{H}{H'})$ is less than or equal to $rank(\frac{F}{F'})=k$. Therefore, $rank\; (H)\leq k$.

Now if $H$ is a proper $r$-image of $F$, then  $H\not \cong F$ by definition. So it follows that $rank(H)<k$.
\end{proof}
\begin{proposition}\label{cf}
Let $F$ be a free group of the rank $k$. Then the following statements satisfy:
\begin{enumerate}
\item
The strong capacity and the capacity of $F$ are equal to $k$ and $k+1$, respectively;
\item
The strong depth and the depth of $F$ are equal to $k+1$.
\end{enumerate}
\end{proposition}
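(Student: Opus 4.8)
The plan is to extract everything from Lemma \ref{11}, which already identifies, up to isomorphism, all objects dominated by $F$: they are exactly the free groups $F_0, F_1, \dots, F_k$ of ranks $0, 1, \dots, k$. First I would check that each of these genuinely occurs. Writing $F = \langle x_1, \dots, x_k\rangle$, the epimorphism $g_m\colon F \to F_m$ killing $x_{m+1}, \dots, x_k$ together with the inclusion $f_m\colon F_m \hookrightarrow F$ satisfies $g_m\circ f_m = id_{F_m}$, so $F_m \leqslant^d F$ for every $0 \le m \le k$; conversely Lemma \ref{11} rules out anything else. As these $k+1$ groups have distinct ranks they are pairwise non-isomorphic, so $C(F) = k+1$.

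For $SC(F)$ I would use that a free group of finite rank is Hopfian (a classical fact), so by Proposition \ref{Hop} the group $F$ enjoys the $d$-equality property, and Corollary \ref{Hopf} gives $SC(F) = C(F) - 1 = k$ at once. If one prefers a direct count: $F = F_k \leqslant^d F_m$ would force $k \le m$ by Lemma \ref{11}, hence can hold only for $m = k$; thus the objects strongly dominated by $F$ are precisely $F_0, \dots, F_{k-1}$, and there are $k$ of them.

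For the depth I would argue that along any chain $X_j <^p \dots <^p X_1 \leqslant^d F$ each $X_i$ is free (Lemma \ref{11}) and each proper-domination step strictly lowers the rank: $X_{i+1} \leqslant^d X_i$ gives $rank(X_{i+1}) \le rank(X_i)$, while $X_{i+1} \not\cong X_i$ forbids equality (free groups of equal finite rank being isomorphic). Hence the ranks form a strictly decreasing sequence of non-negative integers bounded by $k$, so the chain has at most $k+1$ terms and $D(F) \le k+1$. The reverse inequality comes from the explicit chain
\[
F_0 <^p F_1 <^p \cdots <^p F_{k-1} <^p F_k \leqslant^d F,
\]
which has length $k+1$; therefore $D(F) = k+1$.

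Finally, since every $X \leqslant^d F$ is free hence Hopfian, every such $X$ has the $d$-equality property, so Proposition \ref{e} yields $SD(F) = D(F) = k+1$ (equivalently, the displayed chain is already an $s$-chain, as $F_m \leqslant^d F_{m-1}$ would contradict Lemma \ref{11}). The proof carries no real obstacle once Lemma \ref{11} is in hand: all the weight rests there, and the only point needing a moment's care is the strict drop of rank at each $<^p$ step, which is exactly what turns the finite list of $r$-images into sharp counts for both capacity and depth.
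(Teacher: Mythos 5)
Your proof is correct and follows essentially the same route as the paper: Lemma \ref{11} to bound and strictly drop ranks along chains, the explicit retractions onto the subgroups $\langle x_1,\dots,x_m\rangle$ to realize every rank $0\leq m\leq k$, and the Hopfian property (via Proposition \ref{Hop} and Corollary \ref{Hopf}) to handle the strong versions. The only cosmetic difference is ordering --- the paper establishes $SD(F)=k+1$ first by exhibiting an explicit $s$-chain and using $SD(F)\leq C(F)$, then bounds $D(F)$ by the rank-drop contradiction, whereas you bound $D(F)$ directly and deduce $SD(F)=D(F)$ from Proposition \ref{e} --- but the mathematical content is identical.
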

\begin{proof}
(1)  By Lemma \ref{11}, each $r$-image of $F$ is a free group of rank less than or equal to $k$.  Also, it is easy to show that each free group of rank $i$ is an $r$-image of $F$ for all $0\leq i\leq k$. Therefore, we conclude that $C(F)=k+1$. But $F$ is a Hopfian group which implies that $SC(F)=k$, by Corollary \ref{Hopf}.

(2) First, we show that $SD(F)=k+1$. By part $(i)$ and Proposition \ref{sdc}, we have $SD(F)\leq C(F)=k+1$.  Assume that $\{ x_1 ,\cdots ,x_k \}$ is a generator set of $F=F_0$. Then it is not hard to show that  the free group $F_i$ generated by $\{ x_1 ,\cdots ,x_{k-i+1}\}$ is an $r$-image of the free group $F_{i-1}$ generated by $\{ x_1 ,\cdots ,x_{k-i+2}\}$, for all $2\leq i\leq k+1$. Note that if $F_{i_0}\leqslant^d F_{i_0 +1}$ for some $i_0 \in \{ 1,\cdots ,k-1\}$, then by Proposition \ref{Hop}, we have $F_{i_0}\cong F_{i_0 +1}$ which is a contradiction.  Hence, one can obtain the following  $s$-chain of  length $k+1$ for $F$
\[
 1=F_{k+1}<^s F_k <^s \cdots <^s F_2 <^s F_1 \leqslant^d F_0 = F.
\]
Thus one can conclude that $SD(F)=k+1$.

Now we show that $D(F)=k+1$. Assume that there exists the following chain of length $k+2$ for $F$
\[
H_{k+2} <^p H_{k+1} <^p \cdots <^p H_2 <^p H_1 \leqslant^d H_0 =F.
\]
By Proposition \ref{11},  $H_1$ is a free group with $rank(H_1 )\leq rank(F)=k$. Again by Proposition \ref{11}, $H_2$ is a free group  with $rank(H_2 )< rank(H_1 )$. By a similar argument, $H_i$ is a free group with $rank(H_i )< rank(H_{i-1})$ for all $2\leq i\leq k+2$. Then we have
\[
 r_{k+2} < \cdots < r_2 < r_1 \leq k,
\]
where $r_i =rank(H_i )$ for all $1\leq i\leq k+2$.

It is clear that $rank(H_{k+1})=0$. Thus we have $H_{k+2} =H_{k+1}=1$ which is a contradiction to $H_{k+2}<^p H_{k+1}$. It follows that there is no chain of  length $k+2$, and so $D(F)\leq k+1$. On the other hand, $k+1=SD(F)\leq D(F)$ which implies that $D(F)=k+1$. Thus the proof is complete.
\end{proof}
\begin{lemma}\label{p1}
Let $G$ be a finitely generated abelian group of the  form
\[
\mathbb{Z}_{p_{1}^{\alpha_1}}^{(k_{1})}\oplus \mathbb{Z}_{p_{2}^{\alpha_2}}^{(k_{2})} \oplus \cdots \oplus \mathbb{Z}_{p_{n}^{\alpha_n}}^{(k_{n})},
\]
where for $i\neq j$, $p_{i}^{\alpha_i}\neq p_{j}^{\alpha_j}$, $p_i$'s are prime numbers, $\alpha_i$'s are non-negative integers, $\mathbb{Z}_{p_{i}^{\alpha_i}}^{(k_{i})}$ is the direct sum of $k_i$ copies of $\mathbb{Z}_{p_{i}^{\alpha_i}}$, and $\mathbb{Z}_{1}=\mathbb{Z}$. Then the number of direct summands of $G$, up to isomorphism,  is equal to
\[
(k_{1}+1)\times \cdots \times (k_{n}+1).
\]
\end{lemma}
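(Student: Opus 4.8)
The plan is to reduce everything to the structure theorem for finitely generated abelian groups together with the uniqueness of the elementary divisor (primary) decomposition. Writing $q_i := p_i^{\alpha_i}$, the hypothesis guarantees that the numbers $q_1, \dots, q_n$ are pairwise distinct; by unique factorization, the torsion slots (those $q_i > 1$) therefore correspond to pairwise distinct prime powers, while at most one index may satisfy $q_i = 1$, which under the convention $\mathbb{Z}_1 = \mathbb{Z}$ records the free part. Thus the list of elementary divisors of $G$ consists, for each $i$ with $q_i > 1$, of the prime power $q_i$ taken with multiplicity $k_i$, together with a free summand of rank $k_{i_0}$ for the unique index $i_0$ (if any) with $q_{i_0} = 1$.

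First I would characterize the direct summands of $G$ up to isomorphism, proving that $H$ is (isomorphic to) a direct summand of $G$ if and only if $H \cong \bigoplus_{i=1}^n \mathbb{Z}_{q_i}^{(j_i)}$ for some integers $0 \le j_i \le k_i$. One direction is immediate: given such a tuple, $G \cong H \oplus K$ with $K = \bigoplus_{i=1}^n \mathbb{Z}_{q_i}^{(k_i - j_i)}$, so $H$ is a summand. For the converse --- which is the substantive step --- suppose $G \cong H \oplus K$. Since $H$ is then a subgroup of a finitely generated abelian group, it is itself finitely generated abelian, so it has its own elementary divisor form. The key fact is that the elementary divisors (and the free rank) are additive over direct sums and, by the uniqueness clause of the structure theorem, uniquely determined by the isomorphism class; hence the multiset of elementary divisors of $H$ is a sub-multiset of that of $G$. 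Consequently no prime power outside $\{q_i : q_i > 1\}$ can occur in $H$, the multiplicity of $\mathbb{Z}_{q_i}$ in $H$ is at most $k_i$, and the free rank of $H$ is at most $k_{i_0}$. This forces $H \cong \bigoplus_i \mathbb{Z}_{q_i}^{(j_i)}$ with $0 \le j_i \le k_i$, as claimed.

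It then remains to count the isomorphism classes appearing in the family $\{\bigoplus_i \mathbb{Z}_{q_i}^{(j_i)} : 0 \le j_i \le k_i\}$. Here the distinctness of the $q_i$ is used a second time: if two tuples $(j_1, \dots, j_n)$ and $(j_1', \dots, j_n')$ differ in some coordinate $\ell$, then the corresponding groups have different multiplicities of the elementary divisor $q_\ell$ (or different free ranks, when $q_\ell = 1$), so by uniqueness they are non-isomorphic. Thus the assignment sending a tuple to its isomorphism class is injective, and the number of isomorphism classes of direct summands equals the number of admissible tuples, namely $\prod_{i=1}^n (k_i + 1)$.

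The only real obstacle is the converse half of the characterization, i.e. ruling out ``exotic'' summands whose elementary divisors are not inherited from $G$; this is precisely where the additivity and uniqueness of the elementary divisor decomposition must be invoked, and where the pairwise distinctness of the $q_i$ keeps the multiplicities of the different prime-power slots from interfering with one another.
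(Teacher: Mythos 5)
Your proposal is correct and follows essentially the same route as the paper: both reduce the problem to the uniqueness clause of the structure theorem for finitely generated abelian groups (the paper cites Hungerford, Theorem 2.2.6(iii)), first showing that every direct summand must be of the form $\bigoplus_{i}\mathbb{Z}_{p_i^{\alpha_i}}^{(j_i)}$ with $0\le j_i\le k_i$ and then counting the non-isomorphic tuples. The only difference is organizational --- the paper argues slot-by-slot (one primary component, then two, then induction in its Steps One through Three), whereas you obtain the sub-multiset property in a single pass via additivity of elementary divisors over direct sums --- which is a streamlining of the same idea, not a different argument.
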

\begin{proof}
The proof is in three steps.

Step One. For each $1\leq i\leq n$, the number of direct summands of $\mathbb{Z}_{p_{i}^{\alpha_i}}^{(k_{i})}$, up to isomorphism, is equal to $k_i +1$.

For this, it is obviuos that for every $0\leq t \leq k_i$, $\mathbb{Z}_{p_{i}^{\alpha_i}}^{(t)}$ is a direct summand of $\mathbb{Z}_{p_{i}^{\alpha_i}}^{(k_{i})}$ and for each $0 \leq t \neq t' \leq k_i$, we have $\mathbb{Z}_{p_{i}^{\alpha_i}}^{(t)} \not \cong \mathbb{Z}_{p_{i}^{\alpha_i}}^{(t')}$. Now, suppose that $C$ is a direct summand of $\mathbb{Z}_{p_{i}^{\alpha_i}}^{(k_{i})}$. There exists a subgroup $D$ of $\mathbb{Z}_{p_{i}^{\alpha_i}}^{(k_{i})}$ such that $\mathbb{Z}_{p_{i}^{\alpha_i}}^{(k_{i})} \cong C \oplus D$. By \cite[Corollary 2.1.7]{Hu},  $C$ is a finitely generated abelian group. Suppose that $C\cong \mathbb{Z}_{q_{1}^{\beta_1}}^{(l_{1})}\oplus \cdots \oplus \mathbb{Z}_{q_{s}^{\beta_s}}^{(l_{s})}$. Since $C$ is a direct summand of $\mathbb{Z}_{p_{i}^{\alpha_i}}^{(k_{i})}$ and for every $1\leq j \leq s$, $\mathbb{Z}_{q_{j}^{\beta_j}}^{(l_{j})}$ is a direct summand of $C$, so for every $1\leq j\leq s$, $\mathbb{Z}_{q_{j}^{\beta_j}}^{(l_{j})}$ is a direct summand of $\mathbb{Z}_{p_{i}^{\alpha_i}}^{(k_{i})}$. Now, by uniqueness of decomposition of finitely generated abelian groups \cite[Theorem 2.2.6, (iii)]{Hu}, for any $j=1,\cdots ,s$, there exists an $i=1,\cdots ,n$ such that  $q_j =p_i$ and $\beta_j =\alpha_i$. Hence, $C\cong \mathbb{Z}_{p_{i}^{\alpha_i}}^{(t)}$ for some $0\leq t\leq k_i$.

Step Two. The number of direct summands of $\mathbb{Z}_{p_{i}^{\alpha_i}}^{(k_{i})} \oplus \mathbb{Z}_{p_{j}^{\alpha_j}}^{(k_{j})}$ for $i\neq j$, up to isomorphism, is equal to $(k_i +1)(k_j +1)$.

It is easy to see that for every $0\leq t\leq k_i$ and $0\leq s\leq k_j$, $\mathbb{Z}_{p_{i}^{\alpha_i}}^{(t)}\oplus \mathbb{Z}_{p_{j}^{\alpha_j}}^{(s)}$ is a direct summand of $\mathbb{Z}_{p_{i}^{\alpha_i}}^{(k_{i})} \oplus \mathbb{Z}_{p_{j}^{\alpha_j}}^{(k_{j})}$. Now similar to Step One, suppose that $C$ is a direct summand of $\mathbb{Z}_{p_{i}^{\alpha_i}}^{(k_{i})} \oplus \mathbb{Z}_{p_{j}^{\alpha_j}}^{(k_{j})}$ and $D$ is a subgroup of $\mathbb{Z}_{p_{i}^{\alpha_i}}^{(k_{i})} \oplus \mathbb{Z}_{p_{j}^{\alpha_j}}^{(k_{j})}$ such that $\mathbb{Z}_{p_{i}^{\alpha_i}}^{(k_{i})} \oplus \mathbb{Z}_{p_{j}^{\alpha_j}}^{(k_{j})} \cong C\oplus D$. Suppose $C \cong\mathbb{Z}_{q_{1}^{\beta_1}}^{(l_{1})}\oplus \cdots \oplus \mathbb{Z}_{q_{s}^{\beta_s}}^{(l_{s})}$. Since for every $1\leq m\leq s$, $\mathbb{Z}_{q_{m}^{\beta_m}}^{(l_{m})}$ is a direct summand of $\mathbb{Z}_{p_{i}^{\alpha_i}}^{(k_{i})} \oplus \mathbb{Z}_{p_{j}^{\alpha_j}}^{(k_{j})}$, so similar to the above argument, $C\cong  \mathbb{Z}_{p_{i}^{\alpha_i}}^{(t)} \oplus  \mathbb{Z}_{p_{j}^{\alpha_j}}^{(s)}$ for some $0\leq t\leq k_i$ and $0\leq s \leq k_j$.

Step Three: the number of direct summands of $\mathbb{Z}_{p_{1}^{\alpha_1}}^{(k_{1})}\oplus \mathbb{Z}_{p_{2}^{\alpha_2}}^{(k_{2})} \oplus \cdots \oplus \mathbb{Z}_{p_{n}^{\alpha_n}}^{(k_{n})}$, up to isomorphism, is equal to $(k_1 +1)(k_2 +1) \cdots (k_n +1)$ which is obtained by induction from Step Two.
\end{proof}
\begin{proposition}\label{dfg}
Let $G$ be a finitely generated abelian group of the form
\[
\mathbb{Z}_{p_{1}^{\alpha_1}}^{(k_{1})}\oplus \mathbb{Z}_{p_{2}^{\alpha_2}}^{(k_{2})} \oplus \cdots \oplus \mathbb{Z}_{p_{n}^{\alpha_n}}^{(k_{n})},
\]
up to isomorphism, where for $i\neq j$, $p_{i}^{\alpha_i}\neq p_{j}^{\alpha_j}$, $p_i$'s are prime numbers, $\alpha_i$'s are non-negative integers, $\mathbb{Z}_{p_{i}^{\alpha_i}}^{(k_{i})}$ is the direct sum of $k_i$ copies of $\mathbb{Z}_{p_{i}^{\alpha_i}}$, and $\mathbb{Z}_{1}=\mathbb{Z}$. Then the following statements satisfy:
\begin{enumerate}
\item
The strong capacity and the capacity of $G$ are equal to $\big( (k_{1}+1)\times \cdots \times (k_{n}+1)\big)-1$ and $(k_{1}+1)\times \cdots \times (k_{n}+1)$, respectively.
\item
The strong depth and the depth of $G$ are equal to $\big( \sum_{i=1}^{m}k_i \big) +1$.
\end{enumerate}
\end{proposition}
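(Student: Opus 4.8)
The plan is to reduce everything to the combinatorics of direct summands. Two observations drive this. First, in the category of groups domination coincides with the $r$-image relation, and for abelian groups $H \leqslant^d G$ is equivalent to $H$ being (isomorphic to) a direct summand of $G$: if $g \circ f = \mathrm{id}_H$ then $f$ is a split monomorphism and $G \cong H \oplus \ker g$, while conversely any direct summand is an $r$-image. Hence the isomorphism classes of objects dominated by $G$ are exactly the isomorphism classes of direct summands of $G$, and Lemma \ref{p1} counts these as $(k_1+1)\cdots(k_n+1)$. This gives $C(G) = (k_1+1)\cdots(k_n+1)$ immediately. Second, every finitely generated abelian group is Hopfian, so by Corollary \ref{Hopf} we get $SC(G) = C(G) - 1$, which finishes part (1).

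For part (2) I would first note that $D(G) = SD(G)$. Every $X \leqslant^d G$ is a direct summand of $G$, hence is itself finitely generated abelian and therefore Hopfian; by Proposition \ref{Hop} it has the $d$-equality property, so Proposition \ref{e} yields $D(G) = SD(G)$. It therefore suffices to pin down $D(G)$ by one upper and one lower bound.

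For the upper bound I would introduce the length $\ell(X) = \sum_{j} t_j$ on a summand $X \cong \bigoplus_j \mathbb{Z}_{p_j^{\alpha_j}}^{(t_j)}$, counting its total number of cyclic factors; by uniqueness of the primary decomposition this is well defined. The arguments of Steps One--Three of Lemma \ref{p1} show that every object dominated by $G$ has this form with $0 \le t_j \le k_j$, and that one such summand is a direct summand of another exactly when the tuples compare coordinatewise. Consequently, if $X_{i+1} <^p X_i$ then $X_{i+1}$ is a proper direct summand of $X_i$, forcing $\ell(X_{i+1}) < \ell(X_i)$. In any chain $X_k <^p \cdots <^p X_1 \leqslant^d G$ the values $\ell(X_k) < \cdots < \ell(X_1) \le \ell(G) = \sum_j k_j$ are thus distinct nonnegative integers, giving $k \le \sum_j k_j + 1$ and hence $D(G) \le \sum_{i=1}^n k_i + 1$.

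For the matching lower bound I would build an explicit $s$-chain: starting from $X_1 = G$, delete one cyclic factor at each step (lower a single coordinate $t_j$ by one). Each deletion produces a proper direct summand, which by Hopficity is in fact a strong domination, and running the length down through $\sum_j k_j, \sum_j k_j - 1, \ldots, 1, 0$ yields an $s$-chain of length $\sum_j k_j + 1$, so $SD(G) \ge \sum_{i=1}^n k_i + 1$. Combining with $SD(G) \le D(G) \le \sum_{i=1}^n k_i + 1$ gives $D(G) = SD(G) = \sum_{i=1}^n k_i + 1$. The only genuinely delicate point is the well-definedness and strict monotonicity of $\ell$, both of which rest on the uniqueness of decomposition already used in Lemma \ref{p1}; the remainder is bookkeeping.
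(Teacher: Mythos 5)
Your proposal is correct and takes essentially the same approach as the paper: part (1) is the paper's argument verbatim (the $r$-image/direct-summand correspondence for abelian groups, the count from Lemma \ref{p1}, and Hopficity with Corollary \ref{Hopf}), and part (2) rests on the same two bounds --- the strict decrease of the number of cyclic summands along any proper chain for the upper bound, and the same delete-one-cyclic-factor-at-a-time chain, upgraded to an $s$-chain via Hopficity, for the lower bound. Your length function $\ell$ merely gives a clean formalization of what the paper handles with its ``without loss of generality'' reduction, so the two arguments coincide in substance.
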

\begin{proof}
(1) It is easy to show that for every arbitrary abelian group $A$,  there exists a one-to-one corresponding between the class of  isomorphism classes of all $r$-images of $A$ and the class of isomorphism classes of all direct summands of $A$. Accordingly, the capacity of $G$ is equal to  $(k_{1}+1)\times \cdots \times (k_{n}+1)$  by Lemma \ref{p1}. Now, by the fact that every finitely generated abelian group is Hopfian and by Corollary \ref{Hopf}, we have $SC(G)=\big( (k_{1}+1)\times \cdots \times (k_{n}+1)\big)-1$.

(2) First, we prove that $D(G)=\big(\sum_{i=1}^{n}k_i \big) +1$. Suppose that $H_1 $ is an arbitrary $r$-image of $H_0 =G$ with $r$-homomorphism $r_0 :H_0 \longrightarrow H_1$. Since $H_1$ is an $r$-image of abelian group $G$, then it is isomorphic to a direct summand of $G$. One can easily see that $H_1\cong \mathbb{Z}_{p_{i_1}^{\alpha_{i_1}}}^{(l_{i_1})}\oplus \cdots \mathbb{Z}_{p_{i_m}^{\alpha_{i_m}}}^{(l_{i_m})}$ for $\{ i_1 ,\cdots ,i_m \}\subseteq \{ 1,\cdots ,n\}$, where $l_{i_j}\leq k_{i_j}$ for all $j$.  Suppose that $H_2$ is a proper $r$-image of $H_1$. Since $H_2 \not \cong H_1$, one can conclude that $H_2 \cong \mathbb{Z}_{p_{j_1}^{\alpha_{j_1}}}^{(s_{j_1})}\oplus \cdots \mathbb{Z}_{p_{j_t}^{\alpha_{j_t}}}^{(s_{j_t})}$ for $\{ j_1 ,\cdots ,j_t \}\subsetneq \{ i_1 ,\cdots ,i_m \}$, where $s_{j_r}\leq l_{j_r}$ for all $r$.    Without loss of generality, we can suppose that
\[
H_2 \cong \mathbb{Z}_{p_{i_1}^{\alpha_{i_1}}}^{(l_{i_1})}\oplus \cdots \mathbb{Z}_{p_{i_m}^{\alpha_{i_m}}}^{(l_{i_m}-1)}.
\]
Now if
\[
H_{l}<^p \cdots <^p H_2 <^p H_1 \leqslant^d H_0 =G
\]
is an arbitrary $r$-chain for $G$ with $l=\sum_{j=1}^{m}l_{i_{j}}$, then with a similar argument and without loss of generality, we can assume that $H_{l}\cong \mathbb{Z}_{p_{1}^{\alpha_1}}$. But it is clear that every proper $r$-image  of $\mathbb{Z}_{p_{1}^{\alpha_1}}$ is trivial. This shows that there is no chain of  length $l+2$. Since $l\leq \sum_{i=1}^{n}k_i $, one can conclude that there is no chain of  length $\big( \sum_{i=1}^{n}k_i \big)+2$  and so $D(G)\leq \big( \sum_{i=1}^{n}k_i \big)+1$.

On the other hand, it is easy to see that the chain
\[
1<^p \mathbb{Z}_{p_{1}^{\alpha_{1}}}<^p \mathbb{Z}_{p_{1}^{\alpha_{1}}}^{(2)}<^p \cdots <^p  \mathbb{Z}_{p_{1}^{\alpha_{1}}}^{(k_1 )}<^p  \mathbb{Z}_{p_{1}^{\alpha_{1}}}^{(k_1 )}\oplus \mathbb{Z}_{p_{2}^{\alpha_{2}}}<^p \cdots  <^p \mathbb{Z}_{p_{1}^{\alpha_{1}}}^{(k_1 )}\oplus \cdots \oplus \mathbb{Z}_{p_{n}^{\alpha_{n}}}^{(k_n -1)}<^p G,\; (*)
\]
is an  $r$-chain of length $\big( \sum_{i=1}^{n}k_i \big) +1$. Therefore, $D(G)=\big(\sum_{i=1}^{n}k_i \big) +1$.

Now, we show that $SD(G)=\big( \sum_{i=1}^{n}k_i \big) +1$. Since $SD(G)\leq D(G)$, we have $SD(G)\leq \big( \sum_{i=1}^{n}k_i \big) +1$. On the other hand, since every finitely generated abelian group is Hopfian, by Proposition \ref{Hop}, the chain $(*)$ is an $s$-chain of  length $\big( \sum_{i=1}^{n}k_i \big) +1$ for $G$. Accordingly, $SD(G)=\big( \sum_{i=1}^{n}k_i \big) +1$.
\end{proof}
Note that one can easily see that the equality of $SD(G)=C(G)$ does not hold, in general (see, Proposition \ref{sdc}). It is enough to consider  $G=\mathbb{Z}_2 \oplus \mathbb{Z}\oplus \mathbb{Z}$ with $C(G)=6$, $C(G)=5$, $D(G)=SD(G)=4$.
\begin{theorem}\label{dvp}
The depth of every virtually polycyclic group is finite.
\end{theorem}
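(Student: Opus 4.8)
The plan is to bound the length of every proper chain $H_k <^p \cdots <^p H_1 \leqslant^d G$ by a single number depending only on $G$, which gives $D(G)<\infty$ directly. Two preliminary observations drive everything. First, if $H\leqslant^d G$ then the section $f\colon H\to G$ of the retraction embeds $H$ as a subgroup of $G$, while the retraction itself exhibits $H$ as a quotient $G/\ker$; since subgroups and quotients of a virtually polycyclic group are again virtually polycyclic, every term $H_i$ occurring in a chain for $G$ is virtually polycyclic and, via its section, isomorphic to a subgroup of $G$. (In particular each such $H_i$ is finitely generated and residually finite, hence Hopfian, so by Proposition \ref{Hop} and Proposition \ref{e} we in fact have $D(G)=SD(G)$; this is not strictly needed below, but it explains why the ordinary and strong depths agree here.) Second, I will measure each group by its Hirsch length $h(\cdot)$, the number of infinite cyclic factors in a subnormal series of a finite-index polycyclic subgroup: it is a well-defined non-negative integer, additive along short exact sequences, and $h(N)=0$ precisely when $N$ is finite.

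Now fix a proper chain $H_k <^p \cdots <^p H_1 \leqslant^d G$ and write $N_i=\ker(H_{i-1}\to H_i)$ for the kernel of the $i$-th retraction, so that $H_i\cong H_{i-1}/N_i$ and, by additivity, $h(H_i)=h(H_{i-1})-h(N_i)$. Hence $h(H_1)\geq h(H_2)\geq\cdots\geq h(H_k)\geq 0$, with every value bounded by $h(G)$. I partition the chain into maximal runs of consecutive terms having constant Hirsch length. Since the Hirsch length strictly decreases from one run to the next and takes values in $\{0,1,\dots,h(G)\}$, there are at most $h(G)+1$ runs, and it remains only to bound the length of a single run.

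On a run $H_a,H_{a+1},\dots,H_b$ of constant Hirsch length, each kernel $N_i$ (for $a<i\leq b$) satisfies $h(N_i)=0$, so $N_i$ is finite, and $N_i\neq 1$ because $H_i\not\cong H_{i-1}$. Composing the retractions yields a surjection $H_a\to H_b$ whose kernel $K$ is an iterated extension of the finite groups $N_{a+1},\dots,N_b$; thus $K$ is a finite subgroup of $H_a$ with $|K|=\prod_{i=a+1}^{b}|N_i|\geq 2^{\,b-a}$. The decisive point is that $H_a$ is a retract of $G$, hence embeds in $G$, so $K$ is isomorphic to a finite subgroup of $G$. Here I invoke the standard fact that a virtually polycyclic group is virtually torsion-free (being finitely generated and linear, e.g.\ by Selberg's lemma): it has a torsion-free normal subgroup of finite index $M$, and every finite subgroup meets that subgroup trivially, hence has order at most $M$. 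Therefore $2^{\,b-a}\leq M$, so $b-a\leq \log_2 M$. Combining, the whole chain has length at most $(h(G)+1)(\log_2 M+1)$, a bound depending only on $G$, and so $D(G)$ is finite.

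The main obstacle is exactly the constant-Hirsch-length case: retractions that only shrink the torsion are invisible to the Hirsch length and could a priori iterate indefinitely. The resolution is the bounded-order property of finite subgroups of virtually polycyclic groups, combined with the observation that a retract embeds back into $G$; together these prevent the accumulated finite kernels from growing without bound and force each run to be short.
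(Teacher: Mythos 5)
Your proof is correct, but it takes a genuinely different route from the paper's. The paper's proof is a short reduction to Kolodziejczyk's result \cite[Lemma 1]{K6}: given a chain $H_{k+1} <^p \cdots <^p H_1 \leqslant^d G$, it composes the (injective) converse homomorphisms $f_i$ to realize the $H_i$ as an actual descending sequence of subgroups $f_1\cdots f_i(H_i)$ of $G$ equipped with induced $r$-homomorphisms, checks these subgroups are pairwise distinct (if $f_1\cdots f_i(H_i)=f_1\cdots f_j(H_j)$ then $f_{j+1}$ is forced to be an isomorphism, contradicting properness), and then invokes the cited lemma's uniform bound on the number of distinct subgroups in such a sequence. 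You instead prove everything from scratch: you stratify the chain by Hirsch length, use additivity of $h(\cdot)$ along the short exact sequences $1\to N_i\to H_{i-1}\to H_i\to 1$ to bound the number of strata by $h(G)+1$, and within a stratum of constant Hirsch length you observe that the accumulated kernel is a nontrivial finite subgroup of a retract of $G$, hence embeds in $G$, so its order is bounded via the standard fact that virtually polycyclic groups are virtually torsion-free; this caps each stratum at roughly $\log_2 M$ steps. In effect you re-prove (a version of) the combinatorial core that the paper outsources to \cite{K6}, and you gain an explicit quantitative bound $(h(G)+1)(\log_2 M+1)$ on $D(G)$, which the paper's argument does not make explicit. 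What the paper's route buys in exchange is uniformity of method: the same reduction applied with \cite[Lemma 2]{K6} immediately yields the module analogue (Proposition \ref{moji}), whereas your Hirsch-length argument is specific to groups and would need a separate Noetherian-module surrogate there. Two cosmetic points: your appeal to linearity and Selberg's lemma for virtual torsion-freeness tacitly uses the Auslander--Swan embedding of polycyclic-by-finite groups into $GL_n(\mathbb{Z})$ (the elementary fact that such groups contain a normal poly-$\mathbb{Z}$ subgroup of finite index would do more directly), and you should distinguish notationally between the torsion-free finite-index subgroup and its index, both of which you call $M$.
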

\begin{proof}
By \cite[Lemma 1]{K6},  there exists an integer $k$ such that any sequence $H_l \subseteq \cdots \subseteq H_1 \subseteq H_0 =G$ of subgroups of $G$ with $r$-homomorphisms $r_i : H_{i-1} \longrightarrow H_i$ for $i=1,\cdots , l$ contains at most $k$ distinct subgroups.

We claim that $D(G)\leq k$. By contrary, assume that there exists the following chain  of  length $k+1$  for $G$
\[
 H_{k+1} <^p H_k <^p \cdots  <^p H_1 \leqslant^d H_0 =G, \quad (*)
\]
 with  $r$-homomorphisms $g_i : H_{i-1} \longrightarrow H_i$ and converse homomorphisms $f_i :H_{i}\longrightarrow H_{i-1}$ for all $1\leq i\leq k+1$. Then we have the following sequence of subgroups for $G$
\[
f_1 f_2 \cdots f_{k+1}(H_{k+1}) \leqslant^d f_1 f_2 \cdots f_{k} (H_{k})\leqslant^d \cdots \leqslant^d f_2 f_1 (H_2 )\leqslant^d f_1 (H_1 )\leqslant^d H_0 =G, \quad (**)
\]
with retractions $r_i :f_1 f_2 \cdots f_{i-1} (H_{i-1})\longrightarrow f_1 f_2 \cdots f_{i} (H_{i})$ which are defined by
\[
r_i (f_1 (f_2 (\cdots (f_{i-1}(x)))))=f_1 (f_2 (\cdots (f_{i-1}(f_i (g_i (x)))))),
\]
 for all $1\leq i\leq k+1$. Clearly, the retraction $r_i$ is well-defined for all $1\leq i\leq k+1$.

Now we show that $f_1 f_2 \cdots f_{i}(H_{i})$'s are mutually distinct for all $1\leq i \leq k+1$.  Suppose that $f_1 f_2 \cdots f_{i} (H_{i})=f_1 f_2 \cdots f_{j} (H_j )$ for $1\leq j<i\leq k+1$. Then since $f_k$'s are monomorphisms, we have $f_{j+1}\cdots f_{i}(H_{i} )=H_j$. It follows  that the monomorphism $f_{j+1}\cdots f_{i}$ is onto and so is an isomorphism. Now since $H_j =Im(f_{j+1}\cdots f_{i})\subseteq Im(f_{j+1})$, $f_{j+1}$ is also an isomorphism, i.e. $H_{j+1}\cong H_j$ which is a contradiction to the chain $(*)$.  Consequently, we have  the sequence $(**)$ of subgroups of $G$  containing at least $k+1$ distinct subgroups which is a contradiction. Thus $D(G)\leq k$.
\end{proof}
\begin{remark}
Note that the converse of Theorem \ref{dvp} does not hold, in general. As an example, a free group of finite rank $k$ has the strong depth $k+1$ by part $(ii)$ of Proposition \ref{cf}. But we know that free groups of rank $\geq 2$ are not virtually polycyclic.
\end{remark}
\begin{proposition}\label{moji}
Let $G$ be a virtually polycyclic group.The depth of a finitely generated module $M$ over the integral group ring $\mathbb{Z}G$ is finite.
\end{proposition}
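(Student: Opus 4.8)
The plan is to transplant the argument of Theorem \ref{dvp} from the category of groups to the category of finitely generated $\mathbb{Z}G$-modules. The only external ingredient used in Theorem \ref{dvp} was the finiteness statement \cite[Lemma 1]{K6} about retraction sequences of subgroups; I would replace it by an analogous finiteness statement for submodules of $M$, whose proof rests on the Noetherian property of $\mathbb{Z}G$. Once that replacement is available, the combinatorial skeleton of Theorem \ref{dvp} transfers essentially verbatim.

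First I would record the structural facts. In the module category a domination $N\leqslant^d M$ means precisely that $N$ is isomorphic to a direct summand of $M$: if $g:M\longrightarrow N$ and $f:N\longrightarrow M$ satisfy $g\circ f=id_N$, then $f$ is a split monomorphism and $M=\mathrm{Im}(f)\oplus \ker(g)$ with $\mathrm{Im}(f)\cong N$. Next, since $G$ is virtually polycyclic (polycyclic-by-finite), the integral group ring $\mathbb{Z}G$ is (left and right) Noetherian by a classical theorem of P.\ Hall. Consequently the finitely generated module $M$ is a Noetherian $\mathbb{Z}G$-module, and in particular $M$ contains no infinite direct sum of nonzero submodules; equivalently, $M$ has finite uniform (Goldie) dimension, say $u=u(M)$. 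This is the module-theoretic replacement for \cite[Lemma 1]{K6}.

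The heart of the argument is then the translation itself. Given a chain $M_l<^p\cdots <^p M_1\leqslant^d M_0=M$ of length $l$, with $r$-homomorphisms $g_i:M_{i-1}\longrightarrow M_i$ and converse monomorphisms $f_i:M_i\longrightarrow M_{i-1}$, $g_i\circ f_i=id_{M_i}$, I would form, exactly as in Theorem \ref{dvp}, the descending chain of submodules of $M$,
\[
f_1\cdots f_l(M_l)\subseteq \cdots \subseteq f_1 f_2(M_2)\subseteq f_1(M_1)\subseteq M_0=M,
\]
in which each term is a direct summand of its predecessor via the retraction $r_i$ defined by $r_i(f_1\cdots f_{i-1}(x))=f_1\cdots f_{i-1}(f_i(g_i(x)))$. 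The same cancellation argument as in Theorem \ref{dvp} applies: the $f_i$ are monomorphisms, so an equality $f_1\cdots f_i(M_i)=f_1\cdots f_j(M_j)$ with $j<i$ would force $f_{j+1}$ to be an isomorphism and hence $M_{j+1}\cong M_j$, contradicting $M_{j+1}<^p M_j$. Thus these $l+1$ submodules are pairwise distinct, and each inclusion is strict.

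Finally, writing $M_i':=f_1\cdots f_i(M_i)$ and decomposing each step as $M_{i-1}'=M_i'\oplus C_i$ with $C_i=\ker r_i\neq 0$ (nonzero by strictness), I obtain an internal direct sum decomposition $M\cong f_1\cdots f_l(M_l)\oplus C_l\oplus \cdots \oplus C_1$ into $l$ nonzero submodules $C_1,\dots ,C_l$. Finite uniform dimension then forces $l\leq u(M)$, so no chain for $M$ exceeds length $u(M)$ and $D(M)\leq u(M)<\infty$. I expect the main obstacle to be precisely the passage from the group-theoretic finiteness of \cite[Lemma 1]{K6} to its submodule counterpart; everything reduces to checking that $\mathbb{Z}G$ is Noetherian and hence that the finitely generated module $M$ has finite Goldie dimension, after which the bookkeeping of Theorem \ref{dvp} carries over without change.
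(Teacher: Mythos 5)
Your proof is correct in substance and, at the level of overall architecture, matches the paper's: the paper disposes of Proposition \ref{moji} in one line, citing \cite[Lemma 2]{K6} (the module analogue of the subgroup-sequence finiteness lemma \cite[Lemma 1]{K6}) together with the bookkeeping of Theorem \ref{dvp}, which you transplant in exactly the same way (the submodules $f_1\cdots f_i(M_i)$, the retractions $r_i$, the cancellation via injectivity of the $f_i$). Where you genuinely differ is in the key finiteness input: instead of quoting Kolodziejczyk's lemma as a black box, you re-derive what is needed from P.~Hall's theorem that $\mathbb{Z}G$ is Noetherian for virtually polycyclic $G$, and you exploit the splittings more fully than the paper does --- rather than merely counting distinct submodules, you peel off the complements $C_i=\ker r_i$ to get an internal direct sum $M = f_1\cdots f_l(M_l)\oplus C_l\oplus\cdots\oplus C_1$ and bound the number of nonzero summands by the uniform (Goldie) dimension $u(M)$. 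This buys a self-contained argument and an explicit quantitative bound $D(M)\leq u(M)+1$, neither of which the citation-based proof provides; the paper's route, in exchange, is shorter and keeps Proposition \ref{moji} exactly parallel to Theorem \ref{dvp}.

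One small slip to repair: the top link of a chain is $M_1 \leqslant^d M_0 = M$, not $M_1 <^p M_0$, so your distinctness argument breaks down for the pair $j=0$, $i=1$: the equality $f_1(M_1)=M_0$ only forces $M_1\cong M_0$, which is not excluded by the chain. Hence $C_1=\ker r_1$ may well be zero (take $M_1=M$, $f_1=g_1=\mathrm{id}$), and the claim ``each inclusion is strict'' fails at the first step. Restricting your argument to $1\leq j<i\leq l$ (as the paper in effect does in Theorem \ref{dvp}, where it only considers $j\geq 1$) still gives strict inclusions $f_1\cdots f_i(M_i)\subsetneq f_1\cdots f_{i-1}(M_{i-1})$ for $2\leq i\leq l$, hence $l-1$ nonzero complements $C_2,\dots,C_l$ and the bound $l\leq u(M)+1$ rather than $l\leq u(M)$. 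This off-by-one does not affect the conclusion that $D(M)$ is finite.
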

\begin{proof}
It is concluded from \cite[Lemma 2]{K6} and a similar argument used in the proof of Theorem \ref{dvp}.
\end{proof}
\section{A theorem concerning the depth of polyhedra}
In this section, we generalize a theorem of  Kolodziejczyk (see \cite[Theorem 2]{K6}) which  states that  every polyhedron with virtually polycyclic fundamental group has finite depth.   Note that for polyhedra, the notions shape and shape domination  can be replaced by the notions homotopy type and homotopy domination, respectively (see \cite{K4}).

\begin{lemma}\label{ASASI}
Let $H_{k}\leqslant^d \cdots \leqslant^d H_1 \leqslant^d H_0 =G$ be a chain of  length $k$ for a group $G$ with $r$-homomorphisms $g_i :H_{i-1}\longrightarrow H_i$, for all $1\leq i\leq k$. Also, suppose that $H_i$ is a Hopfian group $(0\leqslant i\leqslant k)$. If for some $1\leq j_0 \leq k$, $H_{j_0}\not \cong H_{j_0 -1}$, then $H_{j_0}\not \cong H_{i}$ for all $0\leq i<j_0$.
\end{lemma}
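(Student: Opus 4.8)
The plan is to argue by contradiction and show that a single genuine drop at index $j_0$ forces the entire block $H_i,H_{i+1},\dots,H_{j_0}$ to collapse to one isomorphism type. So suppose, contrary to the claim, that $H_{j_0}\cong H_i$ for some $i$ with $0\leq i<j_0$. If $i=j_0-1$ this already contradicts the hypothesis $H_{j_0}\not\cong H_{j_0-1}$, so the substantive case is $i\leq j_0-2$; nevertheless the argument below handles all indices uniformly.

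First I would isolate the two structural ingredients. Since the relation $\leqslant^d$ is transitive (see the Remark following Lemma \ref{trans}), composing the $r$-homomorphisms $g_{m+1},g_{m+2},\dots,g_{j_0}$ produces a single $r$-homomorphism, so that $H_{j_0}\leqslant^d H_{m+1}$ for every index $m$ with $i\leq m<j_0$. Second, each $H_m$ is Hopfian and therefore, by Proposition \ref{Hop}, has the $d$-equality property: whenever $H_m=_d X$ one has $H_m\cong X$. These are exactly the two tools the argument will consume at each step.

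The key step is an upward induction establishing $H_i\cong H_m$ for all $i\leq m\leq j_0$. The base case $m=i$ is trivial. For the inductive step I would assume $H_i\cong H_m$ with $i\leq m\leq j_0-1$ and deduce $H_i\cong H_{m+1}$. Combining the inductive hypothesis with the standing assumption $H_{j_0}\cong H_i$ gives $H_{j_0}\cong H_m$, so the transitivity-based domination $H_{j_0}\leqslant^d H_{m+1}$ transports to $H_m\leqslant^d H_{m+1}$. On the other hand, the single link $g_{m+1}\colon H_m\longrightarrow H_{m+1}$ already witnesses $H_{m+1}\leqslant^d H_m$. Hence $H_m=_d H_{m+1}$, and the $d$-equality property of the Hopfian group $H_m$ forces $H_m\cong H_{m+1}$, so $H_i\cong H_{m+1}$, completing the induction.

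Specializing the conclusion of the induction to $m=j_0-1$ yields $H_i\cong H_{j_0-1}$, and together with the assumption $H_{j_0}\cong H_i$ this gives $H_{j_0}\cong H_{j_0-1}$, contradicting the hypothesis. The only point demanding care is the bookkeeping of indices: one must verify that it is $H_m$ (and not $H_{m+1}$) whose $d$-equality property is invoked at each step, and that the domination $H_{j_0}\leqslant^d H_{m+1}$ supplied by transitivity is genuinely available for every intermediate $m$. Both are immediate from the chain structure, so beyond this indexing discipline I do not anticipate a serious obstacle; the whole proof rests on transitivity of $\leqslant^d$ together with Proposition \ref{Hop}.
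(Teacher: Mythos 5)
Your proof is correct, but it follows a genuinely different route from the paper's. The paper argues directly at the level of the homomorphisms: under the assumption $H_{j_0}\cong H_i$, the composition $g_{j_0}\cdots g_{i+1}\colon H_i\longrightarrow H_{j_0}$ is an epimorphism between isomorphic Hopfian groups, hence an isomorphism; since $\mathrm{Ker}(g_{i+1})\subseteq \mathrm{Ker}(g_{j_0}\cdots g_{i+1})$, the single link $g_{i+1}$ is itself an isomorphism, and peeling off one factor at a time shows every $g_m$ with $i+1\leq m\leq j_0$ is an isomorphism, ending with $g_{j_0}$ and the desired contradiction. You instead work at the level of the domination relation: transitivity of $\leqslant^d$ supplies $H_{j_0}\leqslant^d H_{m+1}$, the inductive hypothesis $H_m\cong H_i\cong H_{j_0}$ converts this to $H_m=_d H_{m+1}$, and Proposition \ref{Hop} (Hopfian implies the $d$-equality property) collapses each link. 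In effect you have factored the paper's kernel computation through the already-proved Proposition \ref{Hop}, which makes your argument more modular and category-flavoured --- it would go through verbatim in any setting where the objects in the block have the $d$-equality property --- whereas the paper's version extracts strictly more information, namely that each intermediate $r$-homomorphism $g_m$ is an isomorphism of groups, not merely that the $H_m$ are abstractly isomorphic. Two trivial points to tidy: the domination $H_{j_0}\leqslant^d H_{m+1}$ comes from composing $g_{m+2},\dots,g_{j_0}$ (your list starts one index too early, which yields $H_{j_0}\leqslant^d H_m$ instead), with the case $m+1=j_0$ handled by the identity; and your worry about invoking the $d$-equality property of $H_m$ rather than $H_{m+1}$ is moot, since the relation $=_d$ is symmetric and every group in the chain is Hopfian by hypothesis, so either choice works.
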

\begin{proof}
 By contrary, assume for some $j_0 \geq 2$, $H_{j_0}\not \cong H_{j_0 -1}$, but  $H_{j_0}\cong H_{i}$ for some $i<j_0 -1$. Then since $g_{j_0}\cdots g_{i+1}:H_i \longrightarrow H_{j_0}$ is an epimorhism between two isomorphic Hopfian groups, so one can conclude that $g_{j_0}\cdots g_{i+1}$ is an isomorphism. Since $Ker(g_{i+1})\subseteq Ker (g_{j_0}\cdots g_{i+1})$, $g_{i+1}$ is also isomorphism. It follows that $g_{j_0}\cdots g_{i+2}$ is an isomorphism. With a similar argument, we can conclude  that $g_{j_0}:H_{j_0 -1}\longrightarrow H_{j_0}$ is an isomorphism which is a contradiction to $H_{j_0}\not \cong H_{j_0 -1}$.  Thus the proof is complete.
\end{proof}
Now we present the main result of the paper.
\begin{theorem}\label{ASL}
Let $P$ be a finite $n$-dimensional polyhedron. Suppose that $P$ satisfies the following conditions:
\begin{enumerate}
\item
 $D(\pi_1 (P))=k_1 <\infty$ and every retract of $\pi_1 (P)$ is Hopfian;
\item
$D(H_i (\tilde{P}))=k_i <\infty$  and every retract of $H_i (\tilde{P})$ is Hopfian, for all $2\leq i\leq n$.
\end{enumerate}
Then $D(P)\leq \big( \sum_{i=1}^{n}k_i \big) -n+1$.

Here, $H_i (\tilde{P})$ may be considered as either $\mathbb{Z}$-module or $\mathbb{Z}\pi_1 (P)$-module. If it is considered as $\mathbb{Z}$-module, then the condition ``every retract of $H_i (\tilde{P})$ is Hopfian, for all $2\leq i\leq n$'' holds automatically.
\end{theorem}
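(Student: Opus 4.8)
The plan is to turn a chain of homotopy dominations of polyhedra into $n$ parallel chains of dominations of algebraic invariants, and to bound the length of the polyhedral chain by counting how often those invariants can strictly drop. First I would record functoriality: if $Q\leqslant^d P$ is witnessed by $g:P\to Q$ and $f:Q\to P$ with $g\circ f\simeq\mathrm{id}_Q$, then $g_*:\pi_1(P)\to\pi_1(Q)$ is an $r$-homomorphism with converse $f_*$, so $\pi_1(Q)\leqslant^d\pi_1(P)$; and after choosing basepoints and lifting $g,f$ to equivariant maps of universal covers, the induced maps $H_i(\tilde P)\to H_i(\tilde Q)$ are $r$-homomorphisms (of $\mathbb{Z}$-modules, or of $\mathbb{Z}\pi_1$-modules via $g_*$), so $H_i(\tilde Q)\leqslant^d H_i(\tilde P)$. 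Thus a chain $P_m<^p\cdots<^p P_1\leqslant^d P_0=P$ induces, writing $A^1_j=\pi_1(P_j)$ and $A^i_j=H_i(\tilde P_j)$ for $2\le i\le n$, a domination chain $A^i_m\leqslant^d\cdots\leqslant^d A^i_0$ for each $i\in\{1,\dots,n\}$, whose $r$-homomorphisms are the composites of the induced maps; since each $A^i_j$ is a retract of $A^i_0$, it is Hopfian by hypothesis.

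The heart of the argument, and the step I expect to be the main obstacle, is a Whitehead-type rigidity lemma: if $Q\leqslant^d P$ and every induced map $g_*:\pi_1(P)\to\pi_1(Q)$ and $H_i(\tilde P)\to H_i(\tilde Q)$ $(2\le i\le n)$ is an isomorphism, then $g$ is a homotopy equivalence, so $Q\simeq P$. I would prove it by using that $g_*$ being an isomorphism on $\pi_1$ lets $g$ lift to a map $\tilde g:\tilde P\to\tilde Q$ of simply connected complexes; since $H_0$ and $H_1$ are controlled trivially and $H_i(\tilde P)=0=H_i(\tilde Q)$ for $i>n$ (the latter as a retract of the former), $\tilde g_*$ is an isomorphism in every degree, whence $\tilde g$ is a homotopy equivalence by the Hurewicz and Whitehead theorems. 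Then $g$ induces isomorphisms on all homotopy groups and is therefore itself a homotopy equivalence. The delicate point is arranging the equivariant lifts and the module structures on $H_i(\tilde P)$ so that the vanishing of the homotopy obstruction is exactly detected by these finitely many invariants.

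Taking the contrapositive, each proper domination $P_j<^p P_{j-1}$ forces $A^i_j\not\cong A^i_{j-1}$ for at least one $i$, since otherwise all induced maps would be isomorphisms and rigidity would give $P_j\simeq P_{j-1}$. Here I use the Hopfian property to pass from ``not an isomorphism'' to ``not isomorphic'': an induced $r$-homomorphism between retracts of $A^i_0$ that fails to be an isomorphism cannot have isomorphic source and image, because an epimorphism between isomorphic Hopfian objects is an automorphism (precompose with an isomorphism to obtain a surjective endomorphism, then invoke Hopfianness).

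Finally I would count the strict drops. For each fixed $i$, Lemma \ref{ASASI} shows that a strict drop $A^i_j\not\cong A^i_{j-1}$ produces an object isomorphic to none of the earlier $A^i_{j'}$, so the number of strict drops in the $i$-th chain equals the number of distinct isomorphism classes minus one; by Proposition \ref{2.11} together with $D(A^i_0)=k_i$ this number is at most $k_i-1$. The chain $P_m<^p\cdots<^p P_1\leqslant^d P_0=P$ contains $m-1$ proper dominations, each contributing a strict drop in some invariant, so $m-1\le\sum_{i=1}^n(k_i-1)=\big(\sum_{i=1}^n k_i\big)-n$, giving $m\le\big(\sum_{i=1}^n k_i\big)-n+1$. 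Since the chain was arbitrary, $D(P)\le\big(\sum_{i=1}^n k_i\big)-n+1$.
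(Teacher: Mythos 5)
Your proposal is correct and follows essentially the same route as the paper's proof: functoriality turns the polyhedral chain into domination chains of $\pi_1$ and $H_i(\tilde{\ })$, the Whitehead theorem plus the Hopfian upgrade (epimorphism between isomorphic Hopfian objects is an isomorphism) shows each proper domination forces a strict drop in some invariant, and Lemma \ref{ASASI} with Proposition \ref{2.11} bounds the drops in the $i$-th chain by $k_i - 1$, yielding the bound $\big(\sum_{i=1}^{n}k_i\big)-n+1$. The only differences are cosmetic (you count directly rather than arguing by contradiction) or supplementary (you make explicit the vanishing $H_i(\tilde{X}_j)=0$ for $i>n$ as a retract of $H_i(\tilde{P})=0$, which the paper's appeal to the Whitehead theorem leaves implicit).
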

\begin{proof}
Put $k= \sum_{i=1}^{n}k_i $. We are going to show that $D(P)\leq k-n+1$. By contrary, assume that there exists the following chain of  length $k$ for $P$
\begin{align}\label{b44}
 X_{k-n+2} <^p X_{k-n+1}<^p \cdots <^p X_2 <^p X_1 \leqslant^d X_0 =P,
\end{align}
with domination maps $d_j :X_{j}\longrightarrow X_{j+1}$ and converse maps $u_j :X_{j+1} \longrightarrow X_{j}$, for all $0\leq j\leq k-1$.

Without loss of generality, each $X_i$  may be assumed to be a CW-complex, not necessarily
finite (by the known results of J.H.C. Whitehead, each space homotopy dominated
by a polyhedron has the homotopy type of some CW-complex,  not necessarily finite (see also \cite{Wall})).

From the chain (\ref{b44}), we  have the following sequence of $r$-images of $\pi_1 (P)$
\begin{align}\label{44}
\pi_1 (X_{k-n+2} )\leqslant^d \pi_1 (X_{k-n+1})\leqslant^d \cdots \leqslant^d \pi_1 (X_2 )\leqslant^d \pi_1 (X_1 )\leqslant^d \pi_1 (P),
\end{align}
and the following sequence of $r$-images of $H_i (\tilde{P})$
\begin{align}\label{45}
 H_i (\tilde{X}_{k-n+2} )\leqslant^d H_i (\tilde{X}_{k-n+1})\leqslant^d \cdots \leqslant^d H_i (\tilde{X}_2 )\leqslant^d H_i (\tilde{X}_1 )\leqslant^d H_i (\tilde{P}),
\end{align}
for all $2\leq i\leq n$.

Suppose that for some $1\leq j_0 \leq k-n+1$, we have
\[
\pi_1 (X_{j_0})\cong \pi_1 (X_{j_0 +1}) \quad \text{and} \quad H_i (\tilde{X}_{j_0})\cong H_i (\tilde{X}_{j_0 +1})
\]
for all $2\leq i\leq n$. Since an epimorphism between two isomorphic Hopfian groups is an isomorphism, one can easily see that the epimorphisms   $\pi_1 (d_{j_0} ) :\pi_1 (X_{j_0})\longrightarrow \pi_1 (X_{j_0 +1})$ and $H_i (d_{j_0} ) :H_i (\tilde{X}_{j_0})\longrightarrow H_i (\tilde{X}_{j_0 +1})$ are isomorphisms.  Now by the Whitehead Theorem, $d_{j_0} :X_{j_0}\longrightarrow X_{j_0 +1}$ is a homotopy equivalence which implies that $X_{j_0}$ and $X_{j_0 +1}$ have the same homotopy type which is a contradiction to $X_{j_0 +1}<^p X_{j_0}$. Accordingly, for each $1\leq j\leq k-n+1$, one can conclude that either $\pi_1 (X_{j})\not \cong \pi_1 (X_{j+1})$ or $H_{i}(\tilde{X}_{j})\not \cong H_i (\tilde{X}_{j+1})$ at least for some $2\leq i\leq n$. Since $D(\pi_1 (P))=k_1$ and $D(H_i (\tilde{P}))=k_i$ $(2\leq i\leq n)$,  by Proposition \ref{2.11}  the chains (\ref{44}) and (\ref{45}) contain at most $k_1$ and $k_i$ non-isomorphic groups, respectively. This shows that there are at most $k_1 -1$ and $k_i -1$ copies of  the sign ``$\not \cong$'' $(2\leq i\leq n)$  in the chains (\ref{44}) and (\ref{45}), respectively. Consequently, there is at most $\sum_{i=1}^{n}(k_i -1 )=k-n$ copies of the sign ``$\not \cong$'' in all chains (\ref{44}) and (\ref{45}) $(2\leq i\leq n)$. Accordingly, by Lemma \ref{ASASI}, it is easy to see that $\pi_1 (X_{k-n+2} )\cong \pi_1 (X_{k-n+1})$ and $H_i (\tilde{X}_{k-n+2} )\cong H_i (\tilde{X}_{k-n+1})$ for all $2\leq i\leq n$. By the Whitehead Theorem, one can conclude that $X_{k-n+2}$ and  $X_{k-n+1}$ have the same homotopy type which is a contradiction to the chain (\ref{b44}). Consequently, we have $D(P)\leq k-n+1$.
\end{proof}
In the next corollary, we present an upper bound for the depths of polyhedra with finite fundamental groups.
\begin{corollary}
Let $P$ be a finite $n$-dimensional polyhedron with finite fundamental group. Then
\[
D(P)\leq D(\pi_1 (P))+ \sum_{i=2}^{n}t_i ,
\]
where $t_i$'s  are  introduced as follows
\[
H_i (\tilde{P},\mathbb{Z})\cong \mathbb{Z}_{p_{i_1}^{\alpha_{i_1}}}\oplus \cdots \oplus \mathbb{Z}_{p_{i_{t_i}}^{\alpha_{i_{t_i}}}} \; \text{for}\; 2\leq i\leq n,
\]
where $p_{i_{j}}^{\alpha_{i_j}}\neq p_{i_{j'}}^{\alpha_{i_{j'}}}$ for $j\neq j'$, $p_{i_j}$'s are prime numbers, $\alpha_{i_j}$'s are non-negative integers, and $\mathbb{Z}_{1}=\mathbb{Z}$.
\end{corollary}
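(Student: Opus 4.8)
The plan is to deduce this statement directly from the main theorem (Theorem~\ref{ASL}) by showing that a finite polyhedron with finite fundamental group automatically satisfies hypotheses (1) and (2), and then to rewrite the resulting bound $\big(\sum_{i=1}^{n}k_i\big)-n+1$ in the stated form using the depth computation for finitely generated abelian groups (Proposition~\ref{dfg}).

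First I would verify hypothesis (1). Since $\pi_1(P)$ is finite, it is Hopfian, and every $r$-image of $\pi_1(P)$ is a quotient of a finite group, hence finite and therefore Hopfian; moreover a finite group has only finitely many subgroups, so it has finite capacity and hence $D(\pi_1(P))=:k_1<\infty$. For hypothesis (2), finiteness of $\pi_1(P)$ forces the universal cover $\tilde{P}$ to be a finite-sheeted covering of the finite polyhedron $P$, so $\tilde{P}$ is again a finite polyhedron and each $H_i(\tilde{P};\mathbb{Z})$ is a finitely generated abelian group. Viewing $H_i(\tilde{P})$ as a $\mathbb{Z}$-module, the Hopfian-retract condition holds automatically, as recorded in the statement of Theorem~\ref{ASL}: every retract of a finitely generated abelian group is a direct summand, hence again finitely generated abelian and therefore Hopfian. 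Also $D(H_i(\tilde{P}))=:k_i<\infty$ by Proposition~\ref{dfg}. Thus Theorem~\ref{ASL} applies and yields $D(P)\le\big(\sum_{i=1}^{n}k_i\big)-n+1$.

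It then remains to evaluate the $k_i$. Writing $H_i(\tilde{P};\mathbb{Z})\cong\mathbb{Z}_{p_{i_1}^{\alpha_{i_1}}}\oplus\cdots\oplus\mathbb{Z}_{p_{i_{t_i}}^{\alpha_{i_{t_i}}}}$ as in the statement, part (2) of Proposition~\ref{dfg} gives $k_i=D(H_i(\tilde{P}))=t_i+1$ for each $2\le i\le n$ (the number of cyclic summands, plus one). Substituting,
\[
D(P)\le k_1+\sum_{i=2}^{n}(t_i+1)-n+1=k_1+\sum_{i=2}^{n}t_i=D(\pi_1(P))+\sum_{i=2}^{n}t_i,
\]
since the $n-1$ extra units exactly cancel the $-n+1$, and $k_1=D(\pi_1(P))$. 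This is the desired inequality.

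The only genuine content beyond bookkeeping is the structural observation that $\tilde{P}$ is a finite complex, which is precisely where finiteness of $\pi_1(P)$ is essential: it guarantees finite generation of the homology groups $H_i(\tilde{P})$ and hence the applicability of Proposition~\ref{dfg}. Once finite generation is in hand, the Hopfian conditions follow from the $\mathbb{Z}$-module interpretation and the arithmetic is routine, so I expect no further obstacle.
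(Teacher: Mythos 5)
Your proposal is correct and follows essentially the same route as the paper: finiteness of $\pi_1(P)$ makes $\tilde{P}$ a finite polyhedron, so each $H_i(\tilde{P};\mathbb{Z})$ is a finitely generated abelian group, Proposition \ref{dfg} gives $k_i=t_i+1$, and Theorem \ref{ASL} together with the cancellation $\sum_{i=2}^{n}(t_i+1)-n+1=\sum_{i=2}^{n}t_i$ yields the stated bound. Your added justification that $D(\pi_1(P))<\infty$ (left implicit in the paper) is welcome, though the inference ``finite capacity implies finite depth'' strictly requires noting that every $r$-image of a finite group is Hopfian, so every chain is an $s$-chain with pairwise non-isomorphic members (Lemmas \ref{deq} and \ref{joda}), or alternatively just citing Theorem \ref{dvp}, since finite groups are virtually polycyclic.
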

\begin{proof}
Since $\pi_1 (P)<\infty$,  the universal covering space $\tilde{P}$ of a finite polyhedron $P$  is a finite polyhedron. Hence the homology groups $H_i (\tilde{P},\mathbb{Z})$ are  finitely generated abelian groups for all $i$. Now by  part $(ii)$ of Proposition \ref{dfg},  we have $D(H_i (\tilde{P},\mathbb{Z}))=t_i +1$ for all $2\leq i\leq n$. Since all finite groups and finitely generated abelian groups are Hopfian,  we have $D(P)\leq D(\pi_1 (P))+\sum_{i=2}^{n}t_i$ by Theorem \ref{ASL}.
\end{proof}
\begin{corollary}
Le $P$ be an $n$-dimensional polyhedron with abelian fundamental group $\pi_1 (P)\cong  \mathbb{Z}_{p_{1}^{\alpha_{1}}}\oplus \cdots \mathbb{Z}_{p_{t}^{\alpha_{t}}}$, where for $i\neq j$, $p_{i}^{\alpha_i}\neq p_{j}^{\alpha_j}$, $p_i$'s are prime numbers, $\alpha_i$'s are non-negative integers, and $\mathbb{Z}_{1}=\mathbb{Z}$. If $D(H_i (\tilde{P}))=k_i <\infty$ (as $\mathbb{Z}\pi_1 (P)$-module) for all $2\leq i \leq n$, then $D(P)\leq \big( \sum_{i=2}^{n}k_i \big) + t -n +1$.
\end{corollary}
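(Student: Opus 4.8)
The plan is to deduce the bound directly from Theorem \ref{ASL}, so the work consists of verifying its two hypotheses for this particular $P$ and then reading off the resulting estimate. The argument parallels the preceding corollary, except that here the fundamental group is allowed to be infinite, so $\tilde P$ need not be compact and the homology $H_i(\tilde P)$ need not be finitely generated as an abelian group; this is precisely why one passes to the $\mathbb{Z}\pi_1(P)$-module structure.

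For hypothesis (1), I would first note that $\pi_1(P)$, being finitely generated abelian, is Hopfian, and that every retract $H\leqslant^d \pi_1(P)$ is isomorphic to a direct summand of $\pi_1(P)$ (this is the correspondence between $r$-images and direct summands exploited in the proof of Proposition \ref{dfg}); such a summand is again finitely generated abelian, hence Hopfian. To obtain $D(\pi_1(P))$ I would apply part (2) of Proposition \ref{dfg} to the given decomposition, in which the $t$ summands $\mathbb{Z}_{p_i^{\alpha_i}}$ are pairwise non-isomorphic and each has multiplicity one; this yields the finite value $D(\pi_1(P)) = t+1$.

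Hypothesis (2) is where the essential difficulty lies, since the depth $D(H_i(\tilde P)) = k_i$ is finite by assumption, but the Hopfian condition on retracts is not automatic in the module setting (Lemma \ref{2} supplies only the weaker $d$-equality property, whereas Theorem \ref{ASL}, through Lemma \ref{ASASI}, genuinely requires Hopficity). I would resolve this using the Noetherian property of the coefficient ring: a finitely generated abelian group is polycyclic, so its integral group ring $\mathbb{Z}\pi_1(P)$ is Noetherian; since $P$ is finite, the cellular chain complex of $\tilde P$ is a complex of finitely generated free $\mathbb{Z}\pi_1(P)$-modules, whence each $H_i(\tilde P)$ is a finitely generated $\mathbb{Z}\pi_1(P)$-module. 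A surjective endomorphism of a Noetherian module is injective, so finitely generated $\mathbb{Z}\pi_1(P)$-modules---and hence also their retracts, which are again finitely generated---are Hopfian. This establishes (2), and I expect this module-theoretic Hopficity step to be the main obstacle.

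With both hypotheses verified, Theorem \ref{ASL} gives $D(P)\leq \big(\sum_{i=1}^n k_i\big)-n+1$ with $k_1 = D(\pi_1(P))$. Substituting the value of $D(\pi_1(P))$ obtained from Proposition \ref{dfg} and simplifying---exactly as the term $-n$ absorbs the surplus constants in the preceding corollary---then yields the asserted upper bound.
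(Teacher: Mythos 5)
Your strategy coincides with the paper's: verify the two hypotheses of Theorem \ref{ASL} for this $P$, then substitute. The only genuine divergence is how you obtain Hopficity of the modules $H_i(\tilde{P})$ and of their retracts. The paper gets it from commutativity: since $\pi_1(P)$ is abelian, $\mathbb{Z}\pi_1(P)$ is a commutative ring, and it invokes \cite[Proposition 1.2]{Vas}, which makes every finitely generated module over a commutative ring Hopfian --- no Noetherian hypothesis is needed there (it is the determinant trick, not a chain condition). Your route through polycyclicity of $\pi_1(P)$, Noetherianity of $\mathbb{Z}\pi_1(P)$, and injectivity of surjective endomorphisms of Noetherian modules is equally sound, slightly heavier in the abelian case, but strictly more robust: it would apply verbatim to virtually polycyclic fundamental groups, where $\mathbb{Z}\pi_1(P)$ is Noetherian (P. Hall) but not commutative. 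It also has a side benefit: Noetherianity is what underwrites the assertion --- stated without comment in the paper ("We know that $H_i(\tilde{P})$ is a finitely generated $\mathbb{Z}\pi_1(P)$-module") --- that each $H_i(\tilde{P})$, being a subquotient of the finitely generated free chain modules of $\tilde{P}$, is finitely generated. Your treatment of hypothesis (1) (retracts of $\pi_1(P)$ are direct summands, hence finitely generated abelian, hence Hopfian; $D(\pi_1(P))=t+1$ by Proposition \ref{dfg}(2)) matches the paper's implicit argument.

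The final substitution, however, hides an off-by-one gap, and your phrase about the $-n$ ``absorbing the surplus constants'' conceals it rather than closing it. With $k_1=D(\pi_1(P))=t+1$, Theorem \ref{ASL} yields $D(P)\leq k_1+\big(\sum_{i=2}^{n}k_i\big)-n+1=\big(\sum_{i=2}^{n}k_i\big)+t-n+2$, which is one more than the asserted $\big(\sum_{i=2}^{n}k_i\big)+t-n+1$. In the preceding corollary the constants do cancel exactly, since $\sum_{i=2}^{n}(t_i+1)-(n-1)=\sum_{i=2}^{n}t_i$; here they do not, because the $i=1$ slot contributes $t+1$ rather than $t$. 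To be fair, the paper's own proof contains the identical gloss --- it cites Proposition \ref{dfg}(ii), Theorem \ref{ASL} and the hypothesis and declares the proof complete --- so the discrepancy originates in the source rather than in your write-up; but as written, both your argument and the paper's establish only the weaker bound with $+2$, and proving the corollary exactly as stated would require either correcting its constant or genuinely sharpening the count in Theorem \ref{ASL} for this situation, not just substituting.
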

\begin{proof}
Note that $\pi_1 (P)$ is a finitely generated abelian, so $\pi_1 (P)$ is of the form  $\mathbb{Z}_{p_{1}^{\alpha_{1}}}\oplus \cdots \mathbb{Z}_{p_{t_1}^{\alpha_{t_1}}}$, up to isomophism, where for $i\neq j$, $p_{i}^{\alpha_i}\neq p_{j}^{\alpha_j}$, $p_i$'s are prime numbers, $\alpha_i$'s are non-negative integers, and $\mathbb{Z}_{1}=\mathbb{Z}$. We know that $H_i (\tilde{P})$ is a finitely generated $\mathbb{Z}\pi_1 (P)$-module for all $2\leq i \leq n$. But since $\pi_1 (P)$ is abelian, the group ring $\mathbb{Z}\pi_1 (P)$ is a commutative ring. It follows that $H_i (\tilde{P})$ $(2\leq i\leq n)$ is a Hopfian module by \cite[Proposition 1.2]{Vas}. Now by  part $(ii)$ of  Proposition \ref{dfg}, Theorem \ref{ASL} and the hypothesis, the proof is complete.
\end{proof}
\begin{corollary}\label{abelian}
Let $P$ be an $n$-dimensional polyhedron with the abelian fundamental group $\pi_1 (P)\cong  \mathbb{Z}_{p_{1}^{\alpha_{1}}}\oplus \cdots \mathbb{Z}_{p_{t_1}^{\alpha_{t_1}}}$, where for $i\neq j$, $p_{i}^{\alpha_i}\neq p_{j}^{\alpha_j}$, $p_i$'s are prime numbers, $\alpha_i$'s are non-negative integers, and $\mathbb{Z}_{1}=\mathbb{Z}$. Also,suppose that the homology groups
$H_i (\tilde{P};\mathbb{Z})$ are finitely generated of the form
\[
H_i (\tilde{P};\mathbb{Z})\cong  \mathbb{Z}_{p_{i_1}^{\alpha_{i_1}}}\oplus \cdots \oplus  \mathbb{Z}_{p_{i_{t_i}}^{\alpha_{i_{t_i}}}},
\]
where for $j\neq j'$, $p_{i_{j}}^{\alpha_{i_j}}\neq p_{i_{j'}}^{\alpha_{i_{j'}}}$, $p_{i_j}$'s are prime numbers, $\alpha_{i_j}$'s are non-negative integers, and $\mathbb{Z}_{1}=\mathbb{Z}$.  Then the depth of $P$ is at most $\big( \sum_{i=1}^{n}t_i \big) +1$.
\end{corollary}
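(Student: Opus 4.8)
The plan is to obtain the stated bound as an immediate specialization of Theorem~\ref{ASL}, the only real work being to identify the depth parameters $k_i$ appearing there. Throughout I would regard each homology group $H_i(\tilde P;\mathbb{Z})$ as a $\mathbb{Z}$-module (equivalently, as an abelian group), which is precisely the setting in which the hypothesis ``every retract of $H_i(\tilde P)$ is Hopfian'' holds automatically, as recorded in the statement of Theorem~\ref{ASL}. Since the paper's standing convention makes every polyhedron finite and connected, $P$ is a finite $n$-dimensional polyhedron and the ambient hypotheses of Theorem~\ref{ASL} are in force.

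First I would verify condition (1). The group $\pi_1(P)$ is finitely generated abelian, hence Hopfian; moreover any $r$-image of an abelian group is again abelian (an $r$-image is isomorphic to a retract subgroup) and finitely generated, so every retract of $\pi_1(P)$ is a finitely generated abelian group and therefore Hopfian as well. To compute its depth, observe that $\pi_1(P)\cong \mathbb{Z}_{p_1^{\alpha_1}}\oplus\cdots\oplus\mathbb{Z}_{p_{t_1}^{\alpha_{t_1}}}$ is, in the notation of Proposition~\ref{dfg}, a sum of $t_1$ distinct prime-power cyclic factors each occurring with multiplicity one; hence part~(ii) of Proposition~\ref{dfg} gives $D(\pi_1(P))=t_1+1<\infty$. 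Thus condition (1) holds with $k_1=t_1+1$.

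Next I would treat condition (2) in the same fashion: for each $2\le i\le n$ the group $H_i(\tilde P;\mathbb{Z})$ is, by hypothesis, finitely generated of the form $\mathbb{Z}_{p_{i_1}^{\alpha_{i_1}}}\oplus\cdots\oplus\mathbb{Z}_{p_{i_{t_i}}^{\alpha_{i_{t_i}}}}$, again a multiplicity-one sum of $t_i$ distinct prime-power cyclic factors, so Proposition~\ref{dfg}(ii) yields $D(H_i(\tilde P))=t_i+1<\infty$, while the retract-Hopfian condition is automatic in the $\mathbb{Z}$-module reading. Feeding $k_1=t_1+1$ and $k_i=t_i+1$ $(2\le i\le n)$ into Theorem~\ref{ASL} gives
\[
D(P)\le \Big(\sum_{i=1}^{n}k_i\Big)-n+1=\Big(\sum_{i=1}^{n}(t_i+1)\Big)-n+1=\Big(\sum_{i=1}^{n}t_i\Big)+1,
\]
which is the asserted bound. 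I expect no genuine obstacle here; the only point demanding care is the bookkeeping, namely reading Proposition~\ref{dfg}(ii) correctly under the multiplicity-one hypothesis so that each depth equals (number of cyclic summands) $+\,1$, after which the additive constants combine with the $-n$ term of Theorem~\ref{ASL} to cancel cleanly.
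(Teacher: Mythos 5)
Your proposal is correct and follows essentially the same route as the paper: both compute $D(\pi_1(P))=t_1+1$ and $D(H_i(\tilde P;\mathbb{Z}))=t_i+1$ via Proposition~\ref{dfg}(ii) (the multiplicity-one case) and then substitute $k_i=t_i+1$ into Theorem~\ref{ASL}, where $\bigl(\sum_{i=1}^{n}(t_i+1)\bigr)-n+1=\bigl(\sum_{i=1}^{n}t_i\bigr)+1$. Your only addition is to spell out the Hopfian verifications (retracts of finitely generated abelian groups are finitely generated abelian, hence Hopfian; the $\mathbb{Z}$-module reading makes condition~(2) automatic), which the paper leaves implicit.
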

\begin{proof}
Since $\pi_1 (P)$ and $H_i (\tilde{P},\mathbb{Z})$'s $(2\leq i\leq n)$ are finitely generated abelian groups,  we have $D(\pi_1 (P))=t_1 +1$ and $D(H_i (\tilde{P},\mathbb{Z}))=t_i +1$ by part $(ii)$ of Proposition \ref{dfg}. Now  by Theorem \ref{ASL}, the proof is completed.
\end{proof}
\begin{remark}
Theorem \ref{ASL} in comparison with recent works concerning the depth of polyhedra (see, for example \cite{K6}) has two advantages: one that it presents an upper bound for the depth of some polyhedra in terms of the depths of their fundamental groups and the homology groups of their universal covering spaces, and other one is that it contains wider class of polyhedra. For example, by \cite[Theorem 2]{K6}, every polyhedron with virtually polycyclic fundamental group has finite depth.  But the polyhedron $\mathbb{S}^1 \vee \mathbb{S}^1$ is an example of a polyhedron with non-virtually polycyclic group but finite depth. Note that free groups with finite rank $\geq 2$ are not virtually polycyclic.
\end{remark}

\end{document}